\newcommand{\mb}{\mathbb}
\newcommand{\gauss}[2]{\genfrac{[}{]}{0pt}{}{#1}{#2}_q}
 \theoremstyle{definition}
\newtheorem{Thm}{Theorem}[section] 
\newtheorem{Cor}[Thm]{Corollary}
\newtheorem{Lem}[Thm]{Lemma} 
\newtheorem{Prop}[Thm]{Proposition}
\newtheorem{Def}[Thm]{Definition}
\newtheorem{Rem}[Thm]{Remark}
\def\F{{\mathbb F}}
\def\Fq{{\mathbb F}_q}
\def\imod#1{\allowbreak\mkern10mu({\operator@font mod}\,\,#1)} 
\title{Splitting subspaces of Linear Operators over Finite Fields} 
\author{Divya Aggarwal} 
\address{Indraprastha Institute of Information Technology Delhi (IIIT-Delhi), New Delhi 110020, India.}
\email{divyaa@iiitd.ac.in} 
\author{Samrith Ram}
\address{Indraprastha Institute of Information Technology Delhi (IIIT-Delhi), New Delhi 110020, India.}
\email{samrith@gmail.com}
\keywords{splitting subspace, Krylov space, anti-invariant subspace, invariant subspace lattice, $q$-Vandermonde identity, finite field}  
\subjclass[2010]{05A15,11T99,15B33,05A05} 
\begin{document}

\begin{abstract}
Let $V$ be a vector space of dimension $N$ over the finite field $\Fq$ and $T$ be a linear operator on $V$. Given an integer $m$ that divides $N$, an $m$-dimensional subspace $W$ of $V$ is $T$-splitting if $V=W\oplus TW\oplus \cdots \oplus T^{d-1}W$ where $d=N/m$. Let $\sigma(m,d;T)$ denote the number of $m$-dimensional $T$-splitting subspaces. Determining $\sigma(m,d;T)$ for an arbitrary operator $T$ is an open problem. We prove that $\sigma(m,d;T)$ depends only on the similarity class type of $T$ and give an explicit formula in the special case where $T$ is cyclic and nilpotent. Denote by $\sigma_q(m,d;\tau)$ the number of $m$-dimensional splitting subspaces for a linear operator of similarity class type $\tau$ over an $\Fq$-vector space of dimension $md$. For fixed values of $m,d$ and $\tau$, we show that $\sigma_q(m,d;\tau)$ is a polynomial in $q$.
\end{abstract} 
\maketitle
\tableofcontents

\section{Introduction}



Let $\Fq$ denote the finite field with $q$ elements where $q$ is a prime power. Throughout this paper, $V$ will denote a finite-dimensional vector space over $\Fq$. The variables $m$ and $d$ will always denote positive integers. Let $V$ be an $md$-dimensional vector space over the finite field $\Fq$. We begin with a definition.

\begin{Def}
Let $T$ be a linear operator on $V$. A subspace $W$ of $V$ of dimension $m$ is a \emph{splitting subspace} for $T$ if 
\begin{align*}
V=W\oplus T W\oplus \cdots\oplus T^{d-1}W.
\end{align*}
\end{Def} 
The above definition was motivated by the following question asked by Niederreiter \cite[p. 11]{N2}: Let $\alpha \in \F_{q^{md}}$ such that $\alpha$ is a generator of the cyclic group $\F_{q^{md}}^*$ of nonzero elements in $\F_{q^{md}}$. How many $m$-dimensional $\Fq$-linear subspaces $W$ of $\F_{q^{md}}$ satisfy
$$
\F_{q^{md}}=W\oplus \alpha W\oplus \cdots\oplus \alpha^{d-1}W?
$$
Niederreiter encountered this problem in the context of his work on the multiple-recursive matrix method for pseudorandom number generation. This question was settled by Chen and Tseng~\cite[Cor. 3.4]{sscffa} who proved a conjecture \cite[Conj. 5.5]{m=2} that the number of such subspaces is 
\begin{equation}
  \label{eq:split}
\frac{q^{md}-1}{q^m-1}q^{m(m-1)(d-1)}.  
\end{equation}
Another proof of the result of Chen and Tseng and some connections with unimodular matrices may be found in \cite{arora2020unimodular}. Given a linear operator $T$ on $V$, let $\sigma(m,d;T)$ denote the number of $m$-dimensional $T$-splitting subspaces. Finding a formula for  $\sigma(m,d;T)$ for arbitrary $T$ is an open problem \cite[p. 54]{split}. If $T$ has an irreducible characteristic polynomial, then it follows from the work of Chen and Tseng that $\sigma(m,d;T)$ is given by the expression in \eqref{eq:split} above.   
 
In fact, splitting subspaces are closely related to anti-invariant subspaces. A subspace $W$ is said to be $k$-fold anti-invariant if the sum
$$
W+ TW+ \cdots + T^kW
$$
is direct. Barría and Halmos \cite{MR748946} and Sourour \cite{MR822138} determined the maximal dimension of a 1-fold anti-invariant subspace for a given operator $T$. These results were generalized by Knüppel and Nielsen \cite{MR2013452} who solved the same problem for $k$-fold anti-invariant subspaces.

Splitting subspaces also arise in a slightly different context in the setting of finite fields. Suppose $S=\{\alpha_1,\ldots,\alpha_m\}$ is a subset of $V$ and let $W_S$ denote the linear span of $S$. The Krylov subspace of order $d$ generated by $S$ is defined by
$$
\mathrm{Kry}(T,S;d):=W_S +TW_S+\cdots+ T^{d-1}W_S.
$$
Let $\kappa_{m,d}(T)$ denote the probability that the Krylov subspace of order $d$ for a randomly chosen ordered subset $S$ of cardinality $m$ is all of $V$. Since $\mathrm{Kry}(T,S;d)=V$ precisely when $W_S$ is a splitting subspace for $T$, it follows that
$$
\kappa_{m,d}(T)=\frac{\gamma_m(q)\cdot \sigma(m,d;T)}{q^{m^2 d}},  
$$ 
where $\gamma_m(q)=(q^m-1) \ldots (q^m-q^{m-1})$ denotes the number of ordered bases of an $m$-dimensional vector space over $\Fq$. The problem of solving large sparse linear systems over finite fields arises in computer algebra and number theory. Block iterative methods such as the Wiedemann algorithm, which are based on finding linear relations in Krylov subspaces, are used to solve such systems. Giving bounds on the probability $\kappa_{m,d}(T)$ is a difficult and important problem \cite[p. 2]{Brentetal2003} in the analysis of such algorithms. We refer to Brent, Gao and Lauder~\cite{Brentetal2003} and the references therein for more on this topic.

In this paper, we are mainly interested in determining $\sigma(m,d;T)$. Building upon earlier work by Chen and Tseng, we give a general recurrence which may be solved to obtain an expression for the number of splitting subspaces. We prove (Corollary~\ref{type}) that for fixed values of $m$ and $d$, the number $\sigma(m,d;T)$ depends only on the similarity class type (see Definition~\ref{def:sct}) of $T$. A crucial ingredient in the proof is a theorem of Brickman and Fillmore~\cite{MR213378} on the structure of the invariant subspace lattice of a primary transformation.  Let $\sigma_q(m,d;\tau)$ denote the number of $m$-dimensional splitting subspaces for a linear operator of similarity class type $\tau$ defined over an $\Fq$-vector space of dimension $md$. 
In Section~\ref{sec:cyclic}, we show that there is a simple formula for the number of splitting subspaces of a cyclic operator that is either nilpotent or unipotent. We also give an application of our results to the enumeration of invertible matrices having a special form. Finally, we show that, for $m,d,\tau$ fixed, the quantity $\sigma_q(m,d;\tau)$ is a polynomial in $q$.  
\section{Counting Flags of Subspaces} 
\label{sec:recurs}
To unravel the enumeration problem, we begin with some definitions and notation introduced by Chen and Tseng \cite[Sec. 2]{sscffa} to count a more general class of subspaces that includes the $T$-splitting subspaces. In what follows, $V$ denotes a vector space of dimension $N$ over the finite field $\mb{F}_{q}$ and $T$ a linear operator on $V$. 

\begin{Def}
Suppose $S_{1},S_{2},\ldots,S_{k}$ are sets of subspaces of $V$.
Let $[S_{1},S_{2},\ldots,S_{k}]_{T}$ denote the set of all $k$-tuples
$(W_{1},W_{2},\ldots,W_{k})$
such that
\begin{alignat*}{2}
W_{i}\in S_{i}&\quad\text{for}&\ 1\leq &\ i\leq k,
\\  W_{i} \supseteq W_{i+1}+T W_{i+1} &\quad\text{for}&\quad\ 1\leq &\ i\leq k-1.
\end{alignat*}
\end{Def}
If $S_{i}$ is the set of all subspaces of $V$ of dimension $d_{i}$ for some $i$, then $S_{i}$ is denoted within the brackets as $d_{i}$. For instance, $[5,3]_T$ denotes the set all tuples $(W_{1},W_{2})$ such that $\dim(W_{1})=5$, $\dim(W_{2})=3$  and $ W_1\supseteq W_{2}+T W_{2}$.

\begin{Def}
Let $a$, $b$ be nonnegative integers such that $N\geq a\geq b$. Define
\begin{align*}
(a,b)_T=\left\{W\subseteq V: \dim(W)=a\ \text{and}\ \dim(W\cap T^{-1} W)=b\right\}.
\end{align*}
\end{Def}
For instance, $(3,2)_T$ denotes the set of $3$-dimensional subspaces $W$ for which $\dim(W\cap T^{-1} W)=2$.
Note that $(a,a)_T$ is the set of all subspaces of dimension $a$ which are invariant under $T$. We will freely use $[S_{1},S_{2},\ldots,S_{k}]$ to denote $[S_{1},S_{2},\ldots,S_{k}]_{T}$ and $(a,b)$ to denote $(a,b)_T$ when there is only one operator or when the operator under consideration is clear from the context.

\begin{Def}
Suppose $[S_{1,1},S_{1,2}],[S_{2,1},S_{2,2}],\ldots,[S_{r,1},S_{r,2}]$ are sets as defined above. Then
\begin{align*}
\left\langle[S_{1,1},S_{1,2}],[S_{2,1},S_{2,2}],\ldots,[S_{r,1},S_{r,2}]\right\rangle
\end{align*} 
denotes the set of $2r$-tuples of subspaces $(W_{1,1},W_{1,2},W_{2,1},W_{2,2},\ldots, W_{r,1},W_{r,2})$ such that
\begin{alignat*}{2}
(W_{i,1},W_{i,2})\in[S_{i,1},S_{i,2}]&\quad\text{for}&\quad 1\leq &\ i\leq r,
\\W_{i,2} \supseteq W_{i+1,1} &\quad\text{for}&\quad 1\leq &\ i\leq r-1.
\end{alignat*}
\end{Def}

For instance, $\left\langle [5,4],[3,2]\right\rangle$ is the set of all $4$-tuples of subspaces $(W_{1},W_{2},W_{3},W_{4})$ such that
\begin{alignat*}{2}
\dim(W_{1})=5,\ \dim(W_{2})=4,\ \dim(W_{3})=3,\ \dim(W_{4})=2,
\\ W_1 \supseteq W_{2}+T W_{2} ,\quad W_3\supseteq W_{4}+T W_{4} ,
\quad W_{2} \supseteq W_{3}.
\end{alignat*}

The next definition specifies an ordering on tuples labelling sets of the form
\begin{align*}
[(a_{1,1},a_{1,2}),(a_{2,1},a_{2,2}),\ldots,(a_{r,1},a_{r,2})].
\end{align*}
\begin{Def}
  \label{def:order} 
Define an ordering on ordered pairs $(a,b)$ such that $(a_{1},b_{1})\succeq (a_{2},b_{2})$ if $a_{1}>a_{2}$ or $a_{1}=a_{2}$ and $b_{1}\leq b_{2}$. Extend the ordering to tuples of the form $$[(a_{1,1},a_{1,2}),(a_{2,1},a_{2,2}),\ldots,(a_{r,1},a_{r,2})]$$ in such a way that the order is lexicographic in terms of the ordered pairs $(a_{i,1},a_{i,2})$ from left to right. 

For instance, $(5,2)\succ(5,3)\succ(2,1)$ while $[(8,6),(5,3)]\succ[(8,6),(5,4)]\succ[(7,6),(6,2)]$.
\end{Def}

For fixed $r$, the ordering $\succeq$ is a total order. The following proposition is used repeatedly in constructing the recursion and follows easily from the definitions above.
\begin{Prop}
\label{expand}
For nonnegative integers $N\geq a\geq b$, we have
\begin{align*}
[a,b]&=\bigcup_{i=b}^{a}{[(a,i),b]}
\\&=\bigcup_{j=0}^{b}{[a,(b,j)]}.
\end{align*}
\end{Prop}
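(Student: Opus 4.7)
The plan is to verify each of the two equalities by a direct set-theoretic argument, noting that in both cases the right-hand side is actually a disjoint union stratified by an invariant attached to one of the two subspaces. Both inclusions will follow essentially from unwinding the definitions, with the only substantive point being to justify the range of the index in the first equality.

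For the inclusion $\bigcup_{i=b}^{a}[(a,i),b]\subseteq [a,b]$, observe that $(a,i)_T$ is by definition a subset of the set of all $a$-dimensional subspaces of $V$, so any element $(W_1,W_2)\in[(a,i),b]$ automatically satisfies $\dim W_1=a$, $\dim W_2=b$, and $W_1\supseteq W_2+TW_2$. For the reverse inclusion, take $(W_1,W_2)\in[a,b]$ and set $i=\dim(W_1\cap T^{-1}W_1)$, so $W_1\in(a,i)_T$. The key observation is that $W_1\supseteq W_2+TW_2$ implies both $W_2\subseteq W_1$ and $TW_2\subseteq W_1$, the latter giving $W_2\subseteq T^{-1}W_1$. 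Hence $W_2\subseteq W_1\cap T^{-1}W_1$, which forces $b=\dim W_2\leq i$. Together with the trivial bound $i\leq a$, this places $(W_1,W_2)$ in $[(a,i),b]$ for some $i$ in the stated range. The union is in fact disjoint, since $i$ is uniquely determined by $W_1$.

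For the second equality, the same template applies with $W_2$ in place of $W_1$. The inclusion $\bigcup_{j=0}^{b}[a,(b,j)]\subseteq [a,b]$ is immediate from the definition of $(b,j)_T$. Conversely, given $(W_1,W_2)\in[a,b]$, set $j=\dim(W_2\cap T^{-1}W_2)$, which automatically lies in $[0,b]$ (with no further constraint coming from $W_1$, in contrast to the first equality). Then $W_2\in(b,j)_T$ and the containment $W_1\supseteq W_2+TW_2$ is unchanged, so $(W_1,W_2)\in[a,(b,j)]$. Again the decomposition is disjoint, since $j$ is determined by $W_2$.

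There is no real obstacle here; the proposition is a bookkeeping statement designed to split a flag condition into finer strata indexed by the dimension of the intersection $W\cap T^{-1}W$. The only point deserving a sentence of explanation is why the index $i$ in the first equality starts at $b$ rather than at $0$, and this is exactly the observation that the containment $W_2+TW_2\subseteq W_1$ embeds $W_2$ inside $W_1\cap T^{-1}W_1$. This stratification is what makes the proposition useful for later recursions: summing cardinalities over the strata on the right converts counts of flags into counts indexed by the finer invariants $(a,i)_T$ and $(b,j)_T$.
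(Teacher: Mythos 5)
Your proof is correct and matches the paper's intent: the paper states Proposition~\ref{expand} without proof, remarking only that it follows easily from the definitions, and your definitional verification (including the key point that $W_2+TW_2\subseteq W_1$ forces $W_2\subseteq W_1\cap T^{-1}W_1$, hence $i\geq b$) is exactly the argument being alluded to.
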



The recursion in the next lemma expresses the cardinality of sets of subspaces labelled by a tuple $\nu$ in terms of the cardinality of sets labelled by tuples $\mu\prec \nu$ in the ordering. The base cases are of the form $\left| [(a_{1},a_{1}),(a_{2},a_{2}),\ldots,(a_{r},a_{r})]_T\right|$. 
 The following lemma and Proposition~\ref{splittingsubspaces} extend results obtained by Chen and Tseng that hold for invertible operators.  

\begin{Lem}
\label{recursion}
Let $T$ be a linear operator on an $N$-dimensional vector space. Suppose
\begin{align*}
a_{0,1}=a_{0,2}=N\geq a_{1,1}\geq a_{1,2}\geq a_{2,1}\geq a_{2,2}\geq\ldots\geq a_{r,1}\geq a_{r,2}\geq 0=a_{r+1,1}=a_{r+1,2}\\
 \mbox{ and } a_{i-1,1}\geq 2a_{i,1}-a_{i,2}\quad\text{for}\quad 1\leq i\leq r.
\\(\mbox{If the conditions are not satisfied, then}~[(a_{1,1},a_{1,2}),(a_{2,1},a_{2,2}),\ldots,(a_{r,1},a_{r,2})] \mbox{ is empty}).
\end{align*}

Further, let
\begin{align*}
&A=\left\{(j_{1},\ldots,j_{r}):\max(a_{i+1,2},2a_{i,2}-a_{i,1})\leq j_{i}\leq a_{i,2} \mbox{ and } 1\leq i\leq r\right\},
\\&B=\left\{(k_{1},\ldots,k_{r}):a_{i,2}\leq k_{i}\leq a_{i,1}\mbox{ and } 1\leq i\leq r\right\}.
\end{align*}
 
Then
\begin{align*}
&|[(a_{1,1},a_{1,2}),(a_{2,1},a_{2,2}),\ldots,(a_{r,1},a_{r,2})]|
\\&=\sum_{(j_{1},\ldots,j_{r})\in A}{|[(a_{1,2},j_{1}),(a_{2,2},j_{2}),\ldots,(a_{r,2},j_{r})]|\prod_{i=1}^{r}{\gauss{a_{i-1,2}-(2a_{i,2}-j_{i})}{a_{i,1}-(2a_{i,2}-j_{i})}}}
\\ &-\sum_{(k_{1},\ldots,k_{r})\in B\backslash (a_{1,2},\ldots,a_{r,2})} {|[(a_{1,1},k_{1}),(a_{2,1},k_{2}),\ldots,(a_{r,1},k_{r})]|\prod_{i=1}^{r}{\gauss{k_{i}-a_{i+1,1}}{a_{i,2}-a_{i+1,1}}}}.
\end{align*}
\end{Lem}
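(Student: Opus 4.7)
The plan is to count the auxiliary set $\mathcal{C}:=\langle [a_{1,1},a_{1,2}],[a_{2,1},a_{2,2}],\ldots,[a_{r,1},a_{r,2}]\rangle$ in two different ways and then equate the resulting expressions. Its elements are $2r$-tuples $(W_{1,1},W_{1,2},\ldots,W_{r,1},W_{r,2})$ with $\dim W_{i,1}=a_{i,1}$, $\dim W_{i,2}=a_{i,2}$, $W_{i,1}\supseteq W_{i,2}+TW_{i,2}$, and $W_{i,2}\supseteq W_{i+1,1}$. The one geometric ingredient required is the identity
\[
\dim(W+TW)=2\dim W-\dim(W\cap T^{-1}W)
\]
valid for any subspace $W$; it follows from rank--nullity, since $\dim TW=\dim W-\dim(W\cap\ker T)$ and $\dim(W\cap TW)=\dim(W\cap T^{-1}W)-\dim(W\cap\ker T)$, and the kernel contributions cancel upon substitution into $\dim(W+TW)=\dim W+\dim TW-\dim(W\cap TW)$.

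For the first count, stratify $\mathcal{C}$ by $j_i:=\dim(W_{i,2}\cap T^{-1}W_{i,2})$. The inclusions $W_{i,2}\supseteq W_{i+1,1}\supseteq W_{i+1,2}+TW_{i+1,2}$ place $(W_{1,2},\ldots,W_{r,2})$ in $[(a_{1,2},j_1),\ldots,(a_{r,2},j_r)]$. Conversely, given such a sub-tuple, each $W_{i,1}$ may be chosen as an arbitrary $a_{i,1}$-dimensional subspace with $W_{i,2}+TW_{i,2}\subseteq W_{i,1}\subseteq W_{i-1,2}$, where by convention $W_{0,2}=V$. The dimension identity gives $\dim(W_{i,2}+TW_{i,2})=2a_{i,2}-j_i$, so the number of admissible $W_{i,1}$ is $\gauss{a_{i-1,2}-(2a_{i,2}-j_i)}{a_{i,1}-(2a_{i,2}-j_i)}$. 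Nonvanishing of this coefficient requires $j_i\geq 2a_{i,2}-a_{i,1}$, nonemptiness of the sub-tuple class requires $j_i\geq a_{i+1,2}$, and $j_i\leq a_{i,2}$ is automatic; these assemble into the index set $A$. Multiplying over $i$ yields the first sum on the right-hand side.

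For the second count, stratify $\mathcal{C}$ by $k_i:=\dim(W_{i,1}\cap T^{-1}W_{i,1})$. Combining $W_{i,2}\supseteq W_{i+1,1}$ with $W_{i,1}\supseteq W_{i,2}+TW_{i,2}$ yields $W_{i,1}\supseteq W_{i+1,1}+TW_{i+1,1}$, so $(W_{1,1},\ldots,W_{r,1})\in[(a_{1,1},k_1),\ldots,(a_{r,1},k_r)]$. Conversely, given such a sub-tuple each $W_{i,2}$ is an arbitrary $a_{i,2}$-dimensional subspace with $W_{i+1,1}\subseteq W_{i,2}\subseteq W_{i,1}\cap T^{-1}W_{i,1}$ (using $W_{r+1,1}=0$), contributing $\gauss{k_i-a_{i+1,1}}{a_{i,2}-a_{i+1,1}}$ choices, and $a_{i,2}\leq k_i\leq a_{i,1}$ is precisely the set $B$. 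Isolating the term $(k_1,\ldots,k_r)=(a_{1,2},\ldots,a_{r,2})$, every Gaussian coefficient equals $1$ and the remaining factor is $|[(a_{1,1},a_{1,2}),\ldots,(a_{r,1},a_{r,2})]|$. Equating the two counts of $|\mathcal{C}|$ and solving for this isolated term yields the lemma.

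The book-keeping is where small slips are easy: one must check carefully that the ranges $A$ and $B$ really do coincide with the nonvanishing and nonemptiness constraints on the two sides, and that the inclusion conditions in the $\langle\cdot\rangle$-definition automatically promote to the $[\cdot]_T$-chain conditions required by the sub-tuple classes. The hypothesis $a_{i-1,1}\geq 2a_{i,1}-a_{i,2}$ is the left-hand-side analogue of these constraints, ensuring $W_{i,1}+TW_{i,1}$ fits inside $W_{i-1,1}$, and plays no role in the counting argument itself.
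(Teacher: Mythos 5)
Your proposal is correct and follows essentially the same route as the paper: both count $\langle[a_{1,1},a_{1,2}],\ldots,[a_{r,1},a_{r,2}]\rangle$ in two ways (stratifying by $\dim(W_{i,2}\cap T^{-1}W_{i,2})$ and by $\dim(W_{i,1}\cap T^{-1}W_{i,1})$, which is exactly the paper's use of Proposition~\ref{expand}), invoke $\dim(W+TW)=2\dim W-\dim(W\cap T^{-1}W)$, and isolate the term $(k_1,\ldots,k_r)=(a_{1,2},\ldots,a_{r,2})$. Your explicit verification of the dimension identity and of the nonemptiness/nonvanishing constraints defining $A$ and $B$ fills in details the paper leaves to the reader, but the argument is the same.
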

\begin{proof}
The proof is along the lines of \cite[Lem. 2.7]{sscffa}. The size of
$[(a_{1,1},a_{1,2}),(a_{2,1},a_{2,2}),\ldots,(a_{r,1},a_{r,2})]$ is computed by applying Proposition~\ref{expand}. Consider
\begin{align*}
&|\left\langle[a_{1,1},a_{1,2}],[a_{2,1},a_{2,2}],\ldots,[a_{r,1},a_{r,2}]\right\rangle|
\\ &=\sum_{(k_{1},\ldots,k_{r})\in B} {|\left\langle[(a_{1,1},k_{1}),a_{1,2}],[(a_{2,1},k_{2}),a_{2,2}],\ldots,[(a_{r,1},k_{r}),a_{r,2}]\right\rangle|}
\\&=\sum_{(k_{1},\ldots,k_{r})\in B} {|[(a_{1,1},k_{1}),(a_{2,1},k_{2}),\ldots,(a_{r,1},k_{r})]|\prod_{i=1}^{r}{\gauss{k_{i}-a_{i+1,1}}{a_{i,2}-a_{i+1,1}}}}\label{right}\tag{R}.
\end{align*}
Using Proposition~\ref{expand} again, we have
\begin{align*}
&|\left\langle[a_{1,1},a_{1,2}],[a_{2,1},a_{2,2}],\ldots,[a_{r,1},a_{r,2}]\right\rangle|
\\&=\sum_{(j_{1},\ldots,j_{r})\in A}{|\left\langle[a_{1,1},(a_{1,2},j_{1})],[a_{2,1},(a_{2,2},j_{2})],\ldots,[a_{r,1},(a_{r,2},j_{r})]\right\rangle|}
\\&=\sum_{(j_{1},\ldots,j_{r})\in A}{|[(a_{1,2},j_{1}),(a_{2,2},j_{2}),\ldots,(a_{r,2},j_{r})]|\prod_{i=1}^{r}{\gauss{a_{i-1,2}-(2a_{i,2}-j_{i})}{a_{i,1}-(2a_{i,2}-j_{i})}}}\label{left}\tag{L},
\end{align*}
where the last two equalities follow from the fact that
$$\dim (W +TW)= 2\dim W - \dim(W \cap T^{-1}W)$$
for every subspace $W$. Then $\eqref{left} - \eqref{right} =0$. Adding $|[(a_{1,1},a_{1,2}),(a_{2,1},a_{2,2}),\ldots,(a_{r,1},a_{r,2})]|$ to both sides of this equality, we obtain the lemma.
\end{proof}



\begin{Prop}
\label{splittingsubspaces}
Let $T$ be any linear operator on an $md$-dimensional vector space $V$. Then
\begin{align*}
[((d-1)m,(d-2)m)&,((d-2)m,(d-3)m),\ldots,(2m,m),(m,0)]_{T}
\\&=\left\{(\bigoplus_{i=0}^{d-2}{T^{i}W},\ \bigoplus_{i=0}^{d-3}{T^{i}W},\ \ldots,\ W\oplus T W,\ W)\ :\  \bigoplus_{i=0}^{d-1}{T^{i}W}=V \right\}.
\end{align*}

In particular, $$\sigma(m,d;T)=|[((d-1)m,(d-2)m),((d-2)m,(d-3)m),\ldots,(2m,m),(m,0)]_T|.$$
\end{Prop}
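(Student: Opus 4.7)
The plan is to prove the displayed set equality and then deduce $\sigma(m,d;T) = |[((d-1)m,(d-2)m),\ldots,(m,0)]_T|$ by exhibiting a bijection between $T$-splitting subspaces $W$ and tuples in the labelled set. The bijection sends $W$ to $(W_1,\ldots,W_{d-1})$ with $W_i := \bigoplus_{j=0}^{d-1-i}T^jW$, and sends a tuple back to its last entry $W_{d-1}$.

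For the $(\supseteq)$ inclusion, suppose $W$ is splitting and set $W_i := \bigoplus_{j=0}^{d-1-i}T^jW$. Since $V=\bigoplus_{j=0}^{d-1}T^jW$ has dimension $dm$ while $\dim T^jW \le m$, each restriction $T^j|_W$ must be injective, giving $\dim W_i = (d-i)m$. The sum $W_i + TW_i = \sum_{j=0}^{d-i}T^jW$ is a subsum of the splitting decomposition, so it is direct and has dimension $(d-i+1)m$. The identity
\begin{equation*}
\dim(U\cap T^{-1}U)=2\dim U - \dim(U+TU),
\end{equation*}
applied to $U=W_i$, then yields $\dim(W_i\cap T^{-1}W_i)=(d-i-1)m$. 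Hence $W_i\in((d-i)m,(d-i-1)m)_T$ matches the label, and the nesting condition $W_{i-1}\supseteq W_i+TW_i$ holds with equality.

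For the $(\subseteq)$ inclusion, take a tuple $(W_1,\ldots,W_{d-1})$ from the labelled set and set $W := W_{d-1}$; from $W\in(m,0)_T$ we have $W\cap T^{-1}W=0$. I would prove the claim $W_i=\bigoplus_{j=0}^{d-1-i}T^jW$ by downward induction on $i$, with trivial base $i=d-1$. In the inductive step, assume $W_{i+1}=\bigoplus_{j=0}^{d-2-i}T^jW$. Because $W_{i+1}\in((d-i-1)m,(d-i-2)m)_T$, the identity above gives $\dim(W_{i+1}+TW_{i+1}) = (d-i)m = \dim W_i$. Combined with the containment $W_i\supseteq W_{i+1}+TW_{i+1}$ built into the labelled set, equality is forced: $W_i = W_{i+1}+TW_{i+1} = \sum_{j=0}^{d-1-i}T^jW$, and since $d-i$ subspaces of dimension at most $m$ sum to dimension $(d-i)m$, the sum is direct with each summand of dimension $m$. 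Applying the same step once more at the top (using $W_1\in((d-1)m,(d-2)m)_T$ to obtain $\dim(W_1+TW_1)=dm$), we conclude $V = W_1+TW_1 = \bigoplus_{j=0}^{d-1}T^jW$, so $W$ is indeed splitting.

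No significant obstacle is expected; the entire argument is careful bookkeeping whose only essential tool is the dimension identity above, already used in the proof of Lemma~\ref{recursion}. The two maps described in the first paragraph are mutually inverse by construction, so $\sigma(m,d;T)$ equals the cardinality of the labelled set.
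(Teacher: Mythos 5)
Your proof is correct, and the converse direction is handled by a genuinely different mechanism than the paper's, even though the inductive skeleton (working up the flag from the $m$-dimensional subspace) is the same. In the paper's proof of Proposition~\ref{splittingsubspaces}, the label data is used through the equality $W_{k-1}=W_k\cap T^{-1}W_k$ (containment plus matching dimensions), and the directness of $W_k+T^kW_1$ together with $\dim T^kW_1=m$ is established by an element-wise argument: a nonzero $\beta\in W_k\cap T^kW_1$ (or a nonzero kernel vector of $T$ on $T^{k-1}W_1$) is pulled back to a vector of $W_k\cap T^{-1}W_k=W_{k-1}$, contradicting the induction hypothesis. You instead read off $\dim(W_{i+1}+TW_{i+1})=(d-i)m$ directly from the second coordinate of the label via the identity $\dim(U+TU)=2\dim U-\dim(U\cap T^{-1}U)$ (the same identity the paper uses inside the proof of Lemma~\ref{recursion}), force $W_i=W_{i+1}+TW_{i+1}$ from the containment built into the bracket, and then obtain both the directness of $\sum_j T^jW$ and $\dim T^jW=m$ for free from subadditivity of dimension, since $d-i$ summands of dimension at most $m$ spanning a space of dimension $(d-i)m$ must be independent. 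This yields a shorter argument with no vector-chasing and no separate injectivity step; the paper's route makes explicit the structural facts (injectivity of $T$ on each $T^{j}W$ and $W_{k-1}=W_k\cap T^{-1}W_k$) that your count leaves implicit. Your forward direction, which verifies the labels $((d-i)m,(d-i-1)m)$ via the same identity, also spells out the dimension checks that the paper states without proof, and your closing remark that the two maps are mutually inverse correctly delivers the ``in particular'' equality for $\sigma(m,d;T)$.
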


\begin{proof}
If $W$ is an $m$-dimensional subspace of $V$ such that $\displaystyle \dim(\bigoplus_{i=0}^{d-1}{T^{i}W}) = md$, then
\begin{align*}
(\bigoplus_{i=0}^{d-2}&{T^{i}W},\bigoplus_{i=0}^{d-3}{T^{i}W},\ldots,W\oplus T W,W)
\\&\in[((d-1)m,(d-2)m),((d-2)m,(d-3)m),\ldots,(2m,m),(m,0)].
\end{align*}
Conversely, suppose 
\begin{align*}
(W_{d-1},W_{d-2},\ldots,W_2,W_{1})\in [((d-1)m,(d-2)m),((d-2)m,(d-3)m),\ldots,(2m,m),(m,0)].
\end{align*}
Let $W_0=\{0\}$ and $W_d=W_{d-1}+TW_{d-1}$. We claim that 
$$
W_n=\bigoplus_{i=1}^n T^{i-1}W_1 \quad \mbox{ for } 1\leq n\leq d.
$$
 We induct on $n$. The base case $n=1$ is evident. Now fix $1\leq k\leq d-1$ and suppose $W_j=\bigoplus_{i=1}^j T^{i-1}W_1$ for $j\leq k$. By comparing dimensions, we must have $\dim T^{i-1}W_1=m$ for $i\leq k$. We claim that $W_k\cap T^kW_1=\{0\}$. Suppose there is a nonzero vector $\beta\in W_k\cap T^kW_1$. Then $\beta=T\alpha$ for some $\alpha \in T^{k-1}W_1$ and consequently $\alpha \in W_k\cap T^{-1}W_k=W_{k-1},$ which contradicts the fact that $W_{k-1}\cap T^{k-1}W_1=\{0\}$. This proves the claim.

In fact, the restriction of $T$ to $T^{k-1}W_1$ is injective. For if $T\alpha=0$ for some nonzero $\alpha \in T^{k-1}W_1$, then $\alpha \in W_k\cap T^{-1}W_k=W_{k-1}$, contradicting the fact that $W_{k-1}\cap T^{k-1}W_1=\{0\}$. Injectivity of the restriction implies that $\dim T^kW_1=\dim T^{k-1}W_1=m.$ 

Now $W_k+TW_k\subseteq W_{k+1}$ and 
$$
  \dim (W_k+TW_k)=\dim \bigoplus_{i=1}^{k+1}T^{i-1}W_1=(k+1)m=\dim W_{k+1}.
$$
It follows that $W_{k+1}=W_k+TW_k=\bigoplus_{i=1}^{k+1}T^{i-1}W_1$, completing the proof by induction. Since $\dim W_d=\dim V$, it follows that $W_1$ is $T$-splitting.
\end{proof}

\section{Similarity Class Type and Splitting Subspaces}
\label{sec:type}

The similarity class of an operator $T$ is determined by the isomorphism type of the associated $\mb{F}_q[x]$-module on the vector space $V$ in which the action of $x$ is that of $T$. This module is isomorphic to a direct sum 
\[ \bigoplus_{i=1}^{k}\bigoplus_{j=1}^{l_{i}} 
   \mb{F}_q[x] / (p_{i}^{\lambda_{i,j}}),
\]
where $p_{1},\dots,p_{k}$ are distinct monic irreducible polynomials and, for each $1\leq i\leq k$, the sequence $\lambda_{i,1} \geq \lambda_{i,2}\geq \dots \geq \lambda_{i,l_{i}}$ is an integer partition of $n_{i} = \sum_{j}\lambda_{i,j}$. 
Let $\lambda_{i}$ denote the partition of $n_{i}$ given by the $\lambda_{i,j}$. The similarity class of $T$  is completely determined by the finite set of distinct monic irreducible polynomials 
$p_{1},\ldots,p_{k}$ and the corresponding partitions $\lambda_{1},\ldots,\lambda_{k}$.
\begin{Def}
  \label{def:sct}
If $d_i$ denotes the degree of $p_i$ for $1\leq i\leq k$ in the decomposition above, then the {\it similarity class type} of $T$ is the finite multiset $\{(d_1,\lambda_1), \ldots, (d_k, \lambda_k)\}$.  
\end{Def}
Thus, the similarity class type of $T$ keeps track of only the degrees of the polynomials and the corresponding partitions in the decomposition above. The \emph{size} of a similarity class type is the dimension of the vector space on which a corresponding operator is defined. The notion of similarity class type goes back to the work of Green \cite[p. 405]{MR72878} on the characters of the finite general linear groups.  Corollary~\ref{type} asserts that for fixed integers $m$ and $d$, the number $\sigma(m,d;T)$ depends only on the similarity class type of $T$. We begin by showing that the number of splitting subspaces depends only on the similarity class of $T$.

\begin{Prop}
Let $T$ and $T'$ be similar linear operators on an $md$-dimensional vector space $V$. Then $\sigma(m,d;T)=\sigma(m,d;T')$.
\end{Prop}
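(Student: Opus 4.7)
The natural plan is to produce an explicit bijection between the set of $T$-splitting subspaces and the set of $T'$-splitting subspaces, using the similarity relation directly.

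Since $T$ and $T'$ are similar, there exists an invertible operator $S \in \glv$ with $T' = S T S^{-1}$. I would consider the map $\Phi \colon W \mapsto SW$ on the Grassmannian of $m$-dimensional subspaces of $V$. Because $S$ is invertible, $\Phi$ is already known to be a bijection on $m$-dimensional subspaces (with inverse $W' \mapsto S^{-1}W'$); what remains is to check that it carries $T$-splitting subspaces to $T'$-splitting subspaces.

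The key observation is the intertwining relation $S T^{i} = T'^{i} S$ for every $i \geq 0$, which follows immediately from $T'S = ST$ by induction on $i$. Consequently, for any subspace $W$ and any $i$, one has $S(T^{i}W) = T'^{i}(SW)$. Now if $W$ is $T$-splitting, so that
\[
V = W \oplus TW \oplus \cdots \oplus T^{d-1}W,
\]
applying the linear isomorphism $S$ to both sides preserves the direct sum decomposition (invertible linear maps carry direct sum decompositions to direct sum decompositions), yielding
\[
V = SW \oplus T'(SW) \oplus \cdots \oplus T'^{d-1}(SW),
\]
which shows $SW$ is $T'$-splitting. By symmetry, applying $S^{-1}$ sends $T'$-splitting subspaces to $T$-splitting subspaces, so $\Phi$ restricts to a bijection between the two sets and the cardinalities agree.

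There is no substantial obstacle here; the argument is essentially bookkeeping with the intertwining relation. The only point worth stating carefully is that $S$ sends an internal direct sum to an internal direct sum, which is standard since $S$ is a linear isomorphism of $V$.
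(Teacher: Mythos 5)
Your proposal is correct and takes essentially the same approach as the paper: conjugating by the invertible operator $S$ with $T'=S\circ T\circ S^{-1}$ gives a bijection $W\mapsto SW$ between $T$-splitting and $T'$-splitting subspaces. You merely spell out the intertwining relation and preservation of direct sums, which the paper leaves implicit.
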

\begin{proof}
There exists a linear isomorphism $S$ of $V$ such that $T'=S\circ T\circ S^{-1}$. Then $W$ is a splitting subspace for $T$ if and only if $SW$ is a splitting subspace for $T'$. It follows that $\sigma(m,d;T)=\sigma(m,d;T')$.
\end{proof}

We begin by recalling some basic facts about lattices. A partially ordered set $P$ is called a {\it lattice} if any two elements $a,b\in P$  have a meet (greatest lower bound), denoted by $a \land b$, and a join (least upper bound), denoted $a\lor b$. We denote by $L(T)$, the set of all $T$-invariant subspaces of $V$. Clearly, $L(T)$ is a lattice with subspaces ordered by inclusion, with intersection as meet and linear sum as join. A {\it lattice homomorphism} is a mapping between lattices that preserves meets and joins. Two lattices are {\it isomorphic} if there exists a bijective lattice homomorphism between them.

Let $p=\prod_{i=1}^{k}{p_i}^{n_i}$ denote the canonical factorization of the minimal polynomial $p$ of an operator $T$ into distinct irreducible factors $p_i(1\leq i\leq k)$. Let $V_i = \{ \alpha \in V : {p_i}(T)^{n_i}\alpha=0 \}$. Then $V_i$ is a $T$-invariant subspace of $V$ and
\begin{align*}
V=\bigoplus_{i=1}^{k}V_i.
\end{align*}
This is the primary decomposition of $V$. We call an operator {\it primary} or {\it $p$-primary} if its minimal polynomial is a power of the irreducible polynomial $p$. Denote by $T_i$ the restriction of $T$ to $V_i$. Then $T_i$ is a linear operator on $V_i$. It is known \cite[Thm. 1]{MR213378} that the lattice $L(T)$ is the direct product of the lattices $L(T_i)$, i.e.,  
\begin{align*}  
L(T) = \prod_{i=1}^{k} L(T_i).
\end{align*}
Thus, for each $U \in L(T)$, there exists precisely one sequence $(U_1, \ldots, U_k) \in \prod_{i=1}^{k} L(T_i)$ such that $U = U_1 \oplus \cdots \oplus U_k$. Consequently, it suffices to study the lattices $L(T_i)$ corresponding to the primary components $V_i, 1 \leq i \leq k$.

\begin{Prop}
\label{inv}
If $T$ and $T'$ are similar then there exists a dimension preserving isomorphism between $L(T)$ and $L(T')$.
\end{Prop}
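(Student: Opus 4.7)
The plan is to exhibit the isomorphism explicitly using the conjugating operator that witnesses similarity. Since $T$ and $T'$ are similar, there is a linear automorphism $S$ of $V$ with $T' = S \circ T \circ S^{-1}$. I would define the candidate map
\[
\Phi : L(T) \longrightarrow L(T'), \qquad \Phi(W) = S(W),
\]
and then verify the four required properties: (i) $\Phi$ sends $T$-invariant subspaces to $T'$-invariant subspaces, (ii) $\Phi$ is a bijection, (iii) $\Phi$ preserves meets and joins, and (iv) $\Phi$ preserves dimension.

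For (i), if $W$ is $T$-invariant then
\[
T'(S(W)) = S(T(S^{-1}(S(W)))) = S(T(W)) \subseteq S(W),
\]
so $S(W) \in L(T')$. The symmetric computation using $S^{-1}$ and $T = S^{-1} \circ T' \circ S$ shows that $W \mapsto S^{-1}(W)$ carries $L(T')$ into $L(T)$; since $\Phi$ and this map are mutual inverses as set maps, (ii) follows. For (iii), I would simply note that for any linear bijection $S$ and any subspaces $W_1, W_2$ of $V$ one has $S(W_1 \cap W_2) = S(W_1) \cap S(W_2)$ and $S(W_1 + W_2) = S(W_1) + S(W_2)$, which are the meet and join operations in both $L(T)$ and $L(T')$. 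Property (iv) is immediate because $S$ is a linear isomorphism, and so $\dim S(W) = \dim W$ for every subspace $W$.

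Putting these together, $\Phi$ is the desired dimension-preserving lattice isomorphism between $L(T)$ and $L(T')$. There is no real obstacle here; the content of the proposition is essentially the observation that similar operators differ only by a change of basis, and a change of basis automatically transports the invariant subspace lattice along with it while respecting dimension, containment, intersection, and sum.
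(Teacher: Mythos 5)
Your proposal is correct and follows exactly the paper's argument: the paper also takes the conjugating operator $S$ with $T'=S\circ T\circ S^{-1}$ and checks that $W\mapsto SW$ is a dimension-preserving lattice isomorphism from $L(T)$ to $L(T')$. You simply spell out the verification of invariance, bijectivity, and preservation of meets, joins, and dimension in slightly more detail than the paper does.
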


\begin{proof}
Let $S$ be an invertible transformation such that $T'=S\circ T\circ S^{-1}$. Then $W$ is $T$-invariant if and only if $SW$ is $T'$-invariant. Therefore the map $W \mapsto SW$ is a dimension preserving lattice isomorphism from $L(T)$ to $L(T')$. 
\end{proof}

\begin{Thm} \label{thmbrick}
Let $T$ be a linear  operator on a vector space $V$ over a field $F$ such that $T$ is $p$-primary with $p$ separable. Let $T=S+Q$ denote the Jordan-Chevalley decomposition of $T$ into its semi-simple part $S$ and nilpotent part $Q$. Let $K$ be the algebra of polynomials in $S$ over $F$. Then $K$ is a field isomorphic to $ F[x]/(p(x))$, $V$ is naturally a $K$-vector space, $T$ is $K$-linear, and $L_{F}(T)=L_{K}(T)=L_{K}(Q)$. 
\end{Thm}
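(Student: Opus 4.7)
The plan is to prove the four assertions of the theorem in sequence, with the separability of $p$ serving as the linchpin. First, I would invoke the Jordan-Chevalley decomposition to obtain $T = S + Q$ with $S$ semi-simple, $Q$ nilpotent, $SQ = QS$, and crucially $S, Q \in F[T]$; this descent to $F$ is valid because the minimal polynomial $p^{n}$ of $T$ has $p$ separable (one may appeal to Hensel's lemma applied to $p^{n}$ over $F$). The eigenvalues of $S$ over $\bar{F}$ coincide with those of $T$ and are therefore roots of $p$, so $p(S) = 0$. Since $p$ is irreducible, the minimal polynomial of $S$ is exactly $p$ (provided $V \neq 0$), and consequently the evaluation map $F[x] \to F[S]$ descends to an isomorphism $F[x]/(p(x)) \xrightarrow{\sim} K$, whose left-hand side is a field.

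Next, because $K \subseteq \operatorname{End}_{F}(V)$ is a commutative subalgebra that is a field, $V$ inherits a $K$-vector space structure via $k \cdot v := k(v)$. To see that $T$ is $K$-linear, I would note that $T$ commutes with $S$ (a defining feature of the Jordan-Chevalley decomposition), hence commutes with every polynomial in $S$, that is, with every element of $K$.

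For the lattice equalities, the inclusion $L_{K}(T) \subseteq L_{F}(T)$ is automatic from $F \subseteq K$. Conversely, if $W$ is an $F$-subspace with $TW \subseteq W$, then $W$ is $S$-invariant (since $S \in F[T]$), hence $F[S] = K$-invariant, so $W$ is in fact a $K$-subspace, and is still $T$-invariant, giving $L_{F}(T) = L_{K}(T)$. For $L_{K}(T) = L_{K}(Q)$, the key observation is that under the $K$-structure on $V$, the operator $S$ acts precisely as multiplication by the scalar $S \in K$. Every $K$-subspace $W$ is therefore automatically $S$-stable, so $TW \subseteq W$ if and only if $(T - S)W = QW \subseteq W$.

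The main obstacle I expect is the foundational first step: justifying that the Jordan-Chevalley decomposition exists over the possibly imperfect field $F$ with $S, Q \in F[T]$, and that $S$ has minimal polynomial exactly $p$. Separability of $p$ is essential both here and for ensuring that $K$ turns out to be a field; without it the semi-simple part might fail to lie in $F[T]$ and $F[S]$ might fail to be reduced. Once this is in place, the remaining steps are essentially formal consequences of $S$ being realised as a $K$-scalar.
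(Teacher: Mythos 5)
Your argument is correct. Note, however, that the paper does not actually prove this statement: its entire ``proof'' is a citation to Brickman and Fillmore \cite[Thm.~6]{MR213378}, so what you have done is supply the argument that the paper delegates to the literature. Your route is essentially the standard one underlying that reference: use separability of $p$ to produce the Jordan--Chevalley decomposition with $S,Q\in F[T]$ (via Hensel's lemma in the local ring $F[x]/(p^n)$, or equivalently lifting the residue field $F[x]/(p)$), observe that $p(S)=0$ with $p$ irreducible forces $K=F[S]\cong F[x]/(p)$ to be a field, and then the three lattice identities follow formally: $F$-subspaces invariant under $T$ are invariant under $S\in F[T]$ and hence are $K$-subspaces, giving $L_F(T)=L_K(T)$, while $S$ acts as a $K$-scalar, so a $K$-subspace is $T$-invariant iff it is $Q$-invariant, giving $L_K(T)=L_K(Q)$. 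You correctly identify the one genuinely delicate point, namely that $S$ lies in $F[T]$ and has minimal polynomial exactly $p$, which is precisely where separability enters (and which the paper itself emphasizes when it remarks that irreducible polynomials over finite fields are separable). The only cosmetic caveat is that your eigenvalue argument for $p(S)=0$ is redundant once $S$ is constructed by Hensel lifting, since $p(S)=0$ holds by construction; keeping just the Hensel construction would make the write-up tighter. The trade-off between the two treatments is the obvious one: the paper's citation is economical and suffices for its purposes, whereas your self-contained proof makes explicit how and where the separability hypothesis is used.
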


\begin{proof}
See \cite[Thm. 6]{MR213378}.
\end{proof}

The above theorem applies to every primary operator defined over a finite field since irreducible polynomials in this setting are separable.

\begin{Thm} \label{lattice}
Let $T$ and $T'$ be linear operators of the same similarity class type defined on a vector space $V$ over $\Fq$. Then there exists a dimension preserving isomorphism between $L(T)$ and $L(T')$.
\end{Thm}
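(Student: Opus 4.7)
The plan is to reduce to the primary case via the product decomposition of the invariant-subspace lattice, then use the Brickman--Fillmore theorem (Theorem~\ref{thmbrick}) to translate the question about $L_F(T)$ into a question about a nilpotent operator over a field extension, and finally invoke Proposition~\ref{inv} in that extended setting.

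First I would write out the primary decompositions $V = V_1 \oplus \cdots \oplus V_k$ for $T$ and $V = V'_1 \oplus \cdots \oplus V'_k$ for $T'$. Because $T$ and $T'$ have the same similarity class type, after relabelling the summands one may pair $V_i$ with $V'_i$ so that the restrictions $T_i$ and $T'_i$ are primary with irreducible minimal polynomials $p_i$ and $p'_i$ of the same degree $d_i$ and associated partitions $\lambda_i = \lambda'_i$. Using the lattice product decomposition $L(T) = \prod_i L(T_i)$ (and the analogous statement for $T'$) stated in the excerpt just before Proposition~\ref{inv}, the problem reduces to exhibiting, for each $i$, a dimension-preserving lattice isomorphism $L(T_i) \cong L(T'_i)$; the product of these is then a dimension-preserving isomorphism $L(T) \cong L(T')$.

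Next I would handle one primary pair at a time. By Theorem~\ref{thmbrick}, $L_{\Fq}(T_i) = L_{K_i}(Q_i)$, where $K_i = \Fq[x]/(p_i) \cong \F_{q^{d_i}}$, the space $V_i$ is viewed as a $K_i$-vector space, and $Q_i$ is the nilpotent part of $T_i$; similarly $L_{\Fq}(T'_i) = L_{K'_i}(Q'_i)$ with $K'_i \cong \F_{q^{d_i}}$. Fix any field isomorphism $\varphi_i\colon K_i \to K'_i$; this lets us transport $V'_i$ into a $K_i$-vector space $\widetilde{V}'_i$ of the same $K_i$-dimension as $V_i$, on which $Q'_i$ acts $K_i$-linearly. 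Over $K_i$, the nilpotent operators $Q_i$ and $\widetilde{Q}'_i$ both have Jordan type $\lambda_i$ (this is the content of the partition-matching in the similarity class type), and so they are $K_i$-similar. Proposition~\ref{inv}, applied over the field $K_i$, therefore produces a $K_i$-dimension preserving lattice isomorphism $L_{K_i}(Q_i) \cong L_{K_i}(\widetilde{Q}'_i) = L_{K'_i}(Q'_i)$.

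Finally I would upgrade ``$K_i$-dimension preserving'' to ``$\Fq$-dimension preserving.'' For any subspace $W \in L_{K_i}(Q_i)$ one has $\dim_{\Fq} W = d_i \cdot \dim_{K_i} W$, with the analogous identity on the $T'_i$ side; hence any bijection preserving $K_i$-dimension automatically preserves $\Fq$-dimension. Composing with the identifications from Theorem~\ref{thmbrick} gives the required dimension-preserving isomorphism $L(T_i) \cong L(T'_i)$, and taking the product over $i$ completes the proof. The one step that needs care is verifying that the transported operator $\widetilde{Q}'_i$ really does have Jordan type $\lambda_i$ as a $K_i$-linear map: this is where the assumption ``same similarity class type'' is used essentially, via the fact that the elementary divisor decomposition of $T'_i$ over $\Fq$ becomes, after passing through $\varphi_i$ and Theorem~\ref{thmbrick}, the Jordan decomposition of $\widetilde{Q}'_i$ over $K_i$ with parts of sizes $\lambda'_{i,1}, \ldots, \lambda'_{i,l_i} = \lambda_{i,1}, \ldots, \lambda_{i,l_i}$.
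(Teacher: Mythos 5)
Your proposal is correct and follows essentially the same route as the paper: reduce to the primary case via the lattice product decomposition, apply the Brickman--Fillmore theorem (Theorem~\ref{thmbrick}) to identify $L_{\Fq}(T_i)$ with the invariant-subspace lattice of the nilpotent part over the extension field, and then use Proposition~\ref{inv} together with the field isomorphism $K_i \cong K'_i$. Your explicit transport of scalars via $\varphi_i$ and the remark that $\dim_{\Fq} W = d_i \cdot \dim_{K_i} W$ merely spell out details the paper leaves implicit.
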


\begin{proof}
It suffices to prove the result when $T$ and $T'$ are primary operators. Let $T$ be $p$-primary and $T'$ be $p'$-primary. Let $T=S+Q$ and $T'=S'+Q'$ where $S$ and $S'$ are semi-simple while $Q$ and $Q'$ are nilpotent. Further, let $K$ and $K'$ be the algebras of polynomials in $S$ and $S'$ respectively over $\mb{F}_q$. Theorem~\ref{thmbrick} implies that the fields $K$ and $K'$ are isomorphic since $p$ and $p'$ are irreducible polynomials over $\mb{F}_q$ of the same degree. Further, $L_{\mb{F}_q}(T)=L_{K}(Q)$ and $L_{\mb{F}_q}(T')=L_{K'}(Q')$. Since $T$ and $T'$ are of same similarity class type, it follows that their nilpotent parts $Q$ and $Q'$ are also of the same type. Thus $Q$ and $Q'$ are similar. By Proposition~\ref{inv} and the fact that $K \cong K'$, we obtain a dimension preserving isomorphism between $L_{K}(Q)$  and $L_{K'}(Q')$. 
\end{proof}

\begin{Rem}
In light of the above theorem, given $q$, we may define the number of invariant subspaces of dimension $k$ of a similarity class type $\tau$ to be the number of invariant subspaces of dimension $k$ of some operator $T$ of type $\tau$ over $\Fq$.
\end{Rem}

\begin{Cor} \label{type}
 Suppose $T$ and $T'$ are two operators of the same similarity class type defined on an $md$-dimensional vector space over $\Fq$.Then $\sigma(m,d;T)=\sigma(m,d;T').$
\end{Cor}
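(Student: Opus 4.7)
The plan is to reduce $\sigma(m,d;T)$ to a $\mathbb{Z}[q]$-linear combination of counts of chains of $T$-invariant subspaces with prescribed dimensions, and then invoke Theorem~\ref{lattice}. By Proposition~\ref{splittingsubspaces},
\[
\sigma(m,d;T) = \bigl|[((d-1)m,(d-2)m),((d-2)m,(d-3)m),\ldots,(2m,m),(m,0)]_T\bigr|.
\]
Lemma~\ref{recursion} expresses the cardinality of any tuple set $[(a_{1,1},a_{1,2}),\ldots,(a_{r,1},a_{r,2})]_T$ as a combination of cardinalities of strictly smaller tuples in the ordering $\prec$ of Definition~\ref{def:order}, with coefficients that are products of Gaussian binomial coefficients depending only on the integer parameters and $q$, not on $T$. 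I would iterate this reduction by well-founded induction on $\prec$ until every remaining tuple has the form $[(a_1,a_1),(a_2,a_2),\ldots,(a_s,a_s)]$, i.e.\ every pair labels a set of invariant subspaces.

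A terminal tuple $[(a_1,a_1),\ldots,(a_s,a_s)]_T$ counts chains $V\supseteq W_1\supseteq W_2\supseteq\cdots\supseteq W_s$ of $T$-invariant subspaces with $\dim W_i = a_i$, because the defining relation $W_i\supseteq W_{i+1}+TW_{i+1}$ collapses to $W_i\supseteq W_{i+1}$ once $W_{i+1}\in L(T)$. Theorem~\ref{lattice} supplies a dimension-preserving isomorphism $\varphi\colon L(T)\to L(T')$ whenever $T$ and $T'$ have the same similarity class type, and such $\varphi$ carries any such $T$-chain bijectively to a $T'$-chain with the same dimension sequence. Hence $|[(a_1,a_1),\ldots,(a_s,a_s)]_T| = |[(a_1,a_1),\ldots,(a_s,a_s)]_{T'}|$ for every admissible sequence, and since the recursion coefficients are identical for $T$ and $T'$, substituting back yields $\sigma(m,d;T)=\sigma(m,d;T')$.

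The main obstacle I expect is verifying that iterating Lemma~\ref{recursion} really does terminate at base-case tuples. The point to check is the following monotonicity: if $i^\ast$ is the leftmost index with $a_{i^\ast,1}>a_{i^\ast,2}$, then in every term of the $A$-sum the positions $i<i^\ast$ are forced unchanged (the admissible range for $j_i$ collapses to $\{a_{i,2}\}$ when $a_{i,1}=a_{i,2}$), while at position $i^\ast$ the first coordinate strictly drops from $a_{i^\ast,1}$ to $a_{i^\ast,2}$; and in every nontrivial term of the $B$-sum the smallest $i^\ast$ with $k_{i^\ast}>a_{i^\ast,2}$ witnesses a strict increase in the second coordinate at position $i^\ast$ with earlier positions unchanged. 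Both produce strict descents in the lex ordering, so the well-founded induction goes through and the resulting expansion into base cases is well defined.
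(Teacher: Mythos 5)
Your proposal is correct and follows essentially the same route as the paper's proof: reduce $\sigma(m,d;T)$ via Proposition~\ref{splittingsubspaces} and the recursion of Lemma~\ref{recursion} (whose Gaussian-binomial coefficients are independent of the operator) to the base cases $[(a_1,a_1),\ldots,(a_s,a_s)]$, which count flags of invariant subspaces and are matched for $T$ and $T'$ by the dimension-preserving lattice isomorphism of Theorem~\ref{lattice}. Your explicit verification that each term produced by the recursion is strictly smaller in the ordering $\prec$ is a detail the paper only asserts implicitly, and it is checked correctly.
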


\begin{proof}
  The sets $ [(a_{1},a_{1}),(a_{2},a_{2}),\ldots,(a_{r},a_{r})]_T$ corresponding to base cases  in the recursion of Lemma~\ref{recursion} consist of flags of invariant subspaces $(W_1, \ldots, W_r)$ such that $\dim W_i=a_i$ and $W_{i} \supseteq W_{i+1}$ for $1\leq i\leq r-1$. The existence of a dimension preserving isomorphism between $L(T)$ and $L(T')$ ensures that the base cases coincide:
  $$ |[(a_{1},a_{1}),(a_{2},a_{2}),\ldots,(a_{r},a_{r})]_T| =  |[(a_{1},a_{1}),(a_{2},a_{2}),\ldots,(a_{r},a_{r})]_{T'}|.$$
  Therefore 
\begin{align}
\left|[(a_{1,1},a_{1,2}),(a_{2,1},a_{2,2}),\ldots,(a_{r,1},a_{r,2})]_T\right|=\left|[(a_{1,1},a_{1,2}),(a_{2,1},a_{2,2}),\ldots,(a_{r,1},a_{r,2})]_{T'}\right|, \label{eqn2}
\end{align}
whenever the two quantities are defined. In particular, by Proposition~\ref{splittingsubspaces}, we must have $\sigma(m,d;T)=\sigma(m,d;T').$
\end{proof}


\begin{Cor}
\label{cor:scalar}
Let $T$ be a linear operator on an $md$-dimensional vector space $V$ over $\Fq$ and suppose $c\in \Fq$. If $I$ denotes the identity on $V$, then $\sigma(m,d;T)=\sigma(m,d;T+cI).$
\end{Cor}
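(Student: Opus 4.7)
The plan is to reduce the statement to Corollary~\ref{type} by showing that $T$ and $T+cI$ have the same similarity class type. To this end, I would compare the two $\Fq[x]$-module structures on $V$: call $V_T$ the module in which $x$ acts as $T$, and $V_{T+cI}$ the module in which $x$ acts as $T+cI$. From $x\cdot_{T+cI} v=(T+cI)v=(x+c)\cdot_T v$, extended multiplicatively, one gets $f(x)\cdot_{T+cI} v=f(x+c)\cdot_T v$ for every $f\in\Fq[x]$ and $v\in V$.

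Next I would read off the elementary divisors from this twist. If $V_T\cong\bigoplus_{i,j}\Fq[x]/\bigl(p_i(x)^{\lambda_{i,j}}\bigr)$ with the $p_i$ distinct monic irreducibles of degrees $d_i$, then a cyclic summand $\Fq[x]/(p_i(x)^{\lambda_{i,j}})$ of $V_T$ becomes a cyclic $\Fq[x]$-module under the $T+cI$-action whose annihilator is $\bigl(p_i(x-c)^{\lambda_{i,j}}\bigr)$, since $f(x+c)\in(p_i(x)^{\lambda_{i,j}})$ iff $p_i(x-c)^{\lambda_{i,j}}\mid f(x)$. Hence $V_{T+cI}\cong\bigoplus_{i,j}\Fq[x]/\bigl(p_i(x-c)^{\lambda_{i,j}}\bigr)$. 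Because the substitution $x\mapsto x-c$ is an $\Fq$-algebra automorphism of $\Fq[x]$, the polynomials $p_i(x-c)$ are again pairwise distinct monic irreducibles of the same degrees $d_i$, and the partitions $\lambda_i$ attached to them are unchanged.

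Thus the similarity class type $\{(d_1,\lambda_1),\ldots,(d_k,\lambda_k)\}$ of $T+cI$ coincides with that of $T$, and Corollary~\ref{type} immediately gives $\sigma(m,d;T)=\sigma(m,d;T+cI)$. There is no real obstacle here; the only point requiring care is the bookkeeping of the twisted module structure, and everything else follows from results already established.
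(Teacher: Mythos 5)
Your proposal is correct and takes essentially the same route as the paper: reduce to Corollary~\ref{type} by noting that $T$ and $T+cI$ have the same similarity class type. The paper simply asserts this fact, while you supply the (correct) elementary-divisor bookkeeping via the substitution $x\mapsto x-c$ that justifies it.
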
 

\begin{proof}
 This follows from Corollary \ref{type} as $T$ and $T+cI$ have the same similarity class type.
\end{proof}

\section{Cyclic Nilpotent Operators} 
\label{sec:cyclic}
 In this section, we determine the number of splitting subspaces of a cyclic nilpotent operator by guessing a formula that satisfies the recursion in Lemma \ref{recursion}. The following proposition will prove useful in the computation of base cases.  

\begin{Prop}
  \label{pr:chain}
Let $T$ be a cyclic $p$-primary operator on a vector space $U$ of dimension $ad$ where $d=\deg p$. Then
\begin{align*}
L(T)= \left \{ \ker\ p(T)^j  : 0\leq j\leq a \right\}.    
\end{align*}
\end{Prop}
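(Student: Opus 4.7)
The plan is to exploit the module-theoretic description of $L(T)$. Since $T$ is cyclic, there is a vector $v \in U$ with $U = \Fq[T] v$, and because $T$ is $p$-primary with $\dim_{\Fq} U = ad = a \deg p$, its minimal polynomial (which for a cyclic operator coincides with the characteristic polynomial) must be $p^a$. This yields an $\Fq[x]$-module isomorphism $U \cong \Fq[x]/(p^a)$ sending $v \mapsto 1$, under which the action of $T$ corresponds to multiplication by $x$.

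Under this identification the $T$-invariant subspaces of $U$ are exactly the $\Fq[x]$-submodules of $\Fq[x]/(p^a)$, equivalently the ideals of this quotient ring. By the ideal correspondence these biject with ideals of $\Fq[x]$ containing $(p^a)$; since $\Fq[x]$ is a PID and $p$ is irreducible, every such ideal has the form $(p^j)$ for a unique $0 \leq j \leq a$. Hence $L(T)$ has exactly $a+1$ elements, forming a chain whose $j$-th member corresponds to $(p^j)/(p^a)$.

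It then remains to identify these submodules with the kernels $\ker p(T)^i$. A class $f + (p^a)$ lies in $\ker p(T)^i$ iff $p^a \mid p^i f$, i.e., iff $p^{a-i} \mid f$; so $\ker p(T)^i$ corresponds to $(p^{a-i})/(p^a)$. Letting $i$ run over $0, 1, \ldots, a$ therefore sweeps out every element of $L(T)$, and the dimensions $\dim \ker p(T)^i = id$ are pairwise distinct, so no two of the listed kernels coincide.

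I do not anticipate a genuine obstacle: the argument is standard structure theory for cyclic modules over a PID. The only mild care needed is to reconcile the two indexings, matching the ideal description $(p^j)/(p^a)$ with the kernel description $\ker p(T)^i$ so as to establish the claimed equality of sets rather than merely of cardinalities.
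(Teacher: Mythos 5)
Your proof is correct. The paper itself does not argue this proposition at all: it simply cites Lemma~2 of Brickman and Fillmore (\emph{The invariant subspace lattice of a linear transformation}), so your write-up in effect supplies a self-contained replacement for that citation. Your route is the standard one for a cyclic operator: cyclicity plus $p$-primarity forces the minimal (equals characteristic) polynomial to be $p^a$, giving $U\cong \Fq[x]/(p^a)$ as an $\Fq[x]$-module; invariant subspaces are then exactly the ideals of this quotient, which by the PID structure are the chain $(p^j)/(p^a)$, $0\leq j\leq a$; and the identification $\ker p(T)^i=(p^{a-i})/(p^a)$ correctly reconciles the two indexings, so the listed kernels exhaust $L(T)$ (the dimension count $\dim\ker p(T)^i=id$ is a nice check, though not strictly needed for the set equality). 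Brickman--Fillmore's lemma is proved in essentially the same spirit, so there is no substantive methodological divergence --- the only difference is that you prove what the paper outsources.
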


\begin{proof}
See \cite[Lem. 2]{MR213378}. 
\end{proof}

Suppose $T$ is a cyclic nilpotent operator on $V$ and $\dim V=N$. By Proposition~\ref{pr:chain}, there is precisely one $T$-invariant subspace of dimension $k$ for each integer $0\leq k\leq \dim V$, namely, the kernel of $T^{k}$. As restrictions of cyclic operators to invariant subspaces are cyclic as well, it follows that  $$ \displaystyle |[(a_{1},a_{1}),(a_{2},a_{2}),\ldots,(a_{r},a_{r})]_{T}| =1 \qquad  ( N \geq a_1 \geq a_2 \geq \ldots \geq a_r \geq 0).$$

We require a few lemmas before we proceed to solve the recursion. In what follows, the notation $\sum_{s}$ signifies a sum taken as $s$ varies over all integers with the convention that the $q$-binomial coefficient ${n \brack k}_q$ is zero whenever either $n$ or $k$ is negative, or when $k$ does not lie between $0$ and $n$.
\begin{Lem} \label{id}
For integers $a,b,c$, we have \cite[p. 42]{MR2339282}  
\begin{align*}
\gauss{a}{b} \gauss{b}{c} = \gauss{a}{c} \gauss{a-c}{b-c}.
\end{align*}
\end{Lem}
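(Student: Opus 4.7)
The plan is to prove the identity in two complementary ways: first an algebraic manipulation by unfolding the definition of the Gaussian binomial coefficient and cancelling common factors, and second a combinatorial argument that counts flags of subspaces in two different orders. The identity is symmetric enough that either proof is short, but having both makes the conceptual content transparent and also explains why the hypothesis set is essentially $0 \le c \le b \le a$.

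For the algebraic approach, I would write $\gauss{n}{k}$ in the standard form
\[
\gauss{n}{k} = \frac{\prod_{i=1}^{n}(q^{i}-1)}{\prod_{i=1}^{k}(q^{i}-1)\cdot \prod_{i=1}^{n-k}(q^{i}-1)},
\]
and expand
\[
\gauss{a}{b}\gauss{b}{c}= \frac{\prod_{i=1}^{a}(q^{i}-1)}{\prod_{i=1}^{b}(q^{i}-1)\prod_{i=1}^{a-b}(q^{i}-1)}\cdot \frac{\prod_{i=1}^{b}(q^{i}-1)}{\prod_{i=1}^{c}(q^{i}-1)\prod_{i=1}^{b-c}(q^{i}-1)}.
\]
After cancelling $\prod_{i=1}^{b}(q^{i}-1)$, the remaining expression is symmetric in the three denominator blocks indexed by $a-b$, $c$, and $b-c$. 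Doing the analogous expansion for $\gauss{a}{c}\gauss{a-c}{b-c}$ yields the same quotient after cancelling $\prod_{i=1}^{a-c}(q^{i}-1)$. Since no zero-denominator issues arise in the effective range, this settles the identity wherever both sides are nonzero.

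For the combinatorial proof, assume $0 \le c \le b \le a$ and let $V$ be an $a$-dimensional vector space over $\Fq$. The plan is to count pairs $(U,W)$ with $W \subseteq U \subseteq V$, $\dim U = b$, $\dim W = c$, in two orders. Picking $U$ first and then $W \subseteq U$ gives $\gauss{a}{b}\gauss{b}{c}$. Picking $W$ first gives $\gauss{a}{c}$ choices, and then the $b$-dimensional subspaces $U$ with $W \subseteq U \subseteq V$ are in bijection with the $(b-c)$-dimensional subspaces of $V/W$ via $U \mapsto U/W$; the latter count is $\gauss{a-c}{b-c}$. Equating the two counts gives the identity. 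The only step requiring a little care, and the one I flag as the main (minor) obstacle, is the corner case: if $c,b,a$ violate $0 \le c \le b \le a$, the combinatorial model breaks down, but under the convention stated just before the lemma, at least one factor on each side is zero, so both sides vanish and the identity holds trivially.
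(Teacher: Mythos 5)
Your proposal is correct. The paper gives no proof of this lemma at all — it simply cites Aigner — and your algebraic cancellation of the product formulas (equivalently, the double count of flags $W\subseteq U\subseteq V$) is exactly the standard textbook argument being invoked. Your handling of the degenerate cases is also right: whenever $0\le c\le b\le a$ fails, the stated zero convention forces a vanishing factor on each side, which matters here since the lemma is later applied inside sums over shifted indices.
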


\begin{Lem}
For nonnegative integers $a,b,r$, the $q$-Vandermonde identity \cite[Thm. 3.4]{Andrews} holds:  
  \begin{align*}
    {a+b \brack r}_q =\mathlarger\sum_s {a \brack s}_q{b \brack r-s}_q q^{s(b-r+s)}.
  \end{align*}
\end{Lem}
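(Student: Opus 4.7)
The plan is to derive the identity algebraically from Cauchy's $q$-binomial theorem
\begin{equation*}
\prod_{i=0}^{n-1}(1+q^i x) = \sum_{k\geq 0} \gauss{n}{k} q^{\binom{k}{2}} x^k,
\end{equation*}
applied to both sides of the elementary factorization
\begin{equation*}
\prod_{i=0}^{a+b-1}(1+q^i x) = \prod_{i=0}^{b-1}(1+q^i x) \cdot \prod_{i=0}^{a-1}(1+q^{b+i} x).
\end{equation*}
The point of shifting the $a$-block by $q^b$ (rather than the $b$-block by $q^a$) is that this is what produces the factor $q^{bs}$ that matches the stated exponent $s(b-r+s)$.

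Carrying this out, the left-hand product expands as $\sum_r \gauss{a+b}{r} q^{\binom{r}{2}} x^r$, while expanding the two factors on the right and collecting terms of degree $s+t$ gives $\sum_{s,t} \gauss{a}{s}\gauss{b}{t} q^{\binom{s}{2}+\binom{t}{2}+bs} x^{s+t}$. Equating the coefficients of $x^r$, setting $t = r-s$, and dividing through by $q^{\binom{r}{2}}$ reduces the identity to showing that the exponent $\binom{s}{2}+\binom{r-s}{2}-\binom{r}{2}+bs$ equals $s(b-r+s)$. A short computation gives $\binom{s}{2}+\binom{r-s}{2}-\binom{r}{2} = s(s-r)$, so the exponent collapses to $s(s-r)+bs = s(b-r+s)$, exactly as claimed.

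The only real obstacle is choosing the factorization with the correct orientation; the opposite choice yields the symmetric identity with $a$ and $b$ swapped, which is equivalent but not in the stated form. As a cross-check I would also verify the identity combinatorially by counting $r$-dimensional subspaces $W$ of $V = A \oplus B$, where $\dim A = a$ and $\dim B = b$, stratified by $s = \dim(W\cap A)$. For each pair $(W\cap A,\ \pi_B(W))$ with $W\cap A\subseteq A$ of dimension $s$ and $\pi_B(W)\subseteq B$ of dimension $r-s$, the compatible $r$-dimensional $W$ are in bijection with $\operatorname{Hom}(\pi_B(W),\, A/(W\cap A))$, contributing $q^{(r-s)(a-s)}$ choices. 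Summing over $s$ yields the identity in its symmetric form, which matches the stated version after reindexing $s \mapsto r-s$.
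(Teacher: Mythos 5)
Your proof is correct, and it is genuinely different from the paper's treatment: the paper offers no argument at all for this lemma, simply citing Andrews (Thm.\ 3.4), whereas you derive it from Cauchy's $q$-binomial theorem applied to the factorization $\prod_{i=0}^{a+b-1}(1+q^ix)=\prod_{i=0}^{b-1}(1+q^ix)\prod_{i=0}^{a-1}(1+q^{b+i}x)$. The exponent bookkeeping checks out: the coefficient comparison leaves $\binom{s}{2}+\binom{r-s}{2}-\binom{r}{2}+bs$, and indeed $\binom{s}{2}+\binom{r-s}{2}-\binom{r}{2}=s(s-r)$, so the exponent is $s(b-r+s)$ exactly as stated, and your choice of shifting the $a$-block by $q^b$ is what lands you on the stated orientation rather than the $a\leftrightarrow b$-swapped one. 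The subspace-counting cross-check is also sound ($q^{(r-s)(a-s)}$ choices of $W$ for fixed $W\cap A$ and $\pi_B(W)$ is right), with one small imprecision: stratifying by $\dim(W\cap A)$ and reindexing $s\mapsto r-s$ gives the identity with $a$ and $b$ interchanged, which is an equivalent statement but not literally the displayed one; stratifying instead by $\dim(W\cap B)$ produces the stated form on the nose. What your write-up buys over the paper's citation is a self-contained two-line verification plus a combinatorial interpretation that fits the subspace-counting spirit of the rest of the paper; what the citation buys is brevity for a classical identity.
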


\begin{Lem} \label{first}
For nonnegative integers $a\geq d\geq b\geq c$, we have
  $$
\sum_{s} {a-b \brack b-s}_q {b-c \brack s-c}_q {a-2b+s \brack d-2b+s}_q q^{(b-s)^2}={a-b \brack d-b}_q {d-c \brack b-c}_q.
  $$
\end{Lem}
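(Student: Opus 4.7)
The plan is to reduce the triple sum to a familiar shape by a substitution and a one-step factorization, ending with a direct application of $q$-Vandermonde. First, I would reindex by setting $t = b-s$: the weight $q^{(b-s)^2}$ becomes $q^{t^2}$, the symmetry ${n \brack k}_q = {n \brack n-k}_q$ rewrites ${b-c \brack s-c}_q = {b-c \brack b-s}_q$ as ${b-c \brack t}_q$, and ${a-2b+s \brack d-2b+s}_q$ becomes ${a-b-t \brack d-b-t}_q$. The summand then takes the form
\[
q^{t^{2}}\,{a-b \brack t}_q\,{b-c \brack t}_q\,{a-b-t \brack d-b-t}_q.
\]

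Next, I would apply Lemma \ref{id} to the pair of $q$-binomials that share the parameters $a-b$ and $t$. Taking $(a,b,c) \mapsto (a-b,\,d-b,\,t)$ in that lemma gives
\[
{a-b \brack t}_q {a-b-t \brack d-b-t}_q = {a-b \brack d-b}_q {d-b \brack t}_q.
\]
Since ${a-b \brack d-b}_q$ does not depend on the summation variable, it factors out of the sum, and the identity reduces to proving
\[
\sum_{t} q^{t^{2}}\,{b-c \brack t}_q\,{d-b \brack t}_q = {d-c \brack b-c}_q.
\]

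Finally, I would recognize the remaining sum as an instance of the $q$-Vandermonde identity. Using the symmetry ${d-b \brack t}_q = {d-b \brack d-b-t}_q$ and invoking the $q$-Vandermonde identity with parameters $a \mapsto b-c$, $b \mapsto d-b$, $r \mapsto d-b$ yields
\[
\sum_{t} q^{t^{2}}\,{b-c \brack t}_q\,{d-b \brack d-b-t}_q = {d-c \brack d-b}_q,
\]
and a last symmetry step ${d-c \brack d-b}_q = {d-c \brack b-c}_q$ closes the argument. I do not expect any serious obstacle; the only point requiring a little care is the boundary behavior of the identities when individual $q$-binomials vanish under the hypothesis $a \geq d \geq b \geq c$, but the stated convention that out-of-range coefficients are zero makes the reindexing and the implicitly infinite summation range compatible without a separate case analysis.
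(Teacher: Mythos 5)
Your proof is correct and is essentially the paper's argument run in the opposite direction: both rest on the same two ingredients, namely Lemma~\ref{id} to split off the constant factor ${a-b \brack d-b}_q$ and the $q$-Vandermonde identity with the $q^{t^2}$ weight, combined with the substitution $t=b-s$ (the paper instead starts from $q$-Vandermonde and transforms it into the stated identity). No gap; the boundary conventions are handled exactly as you indicate.
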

\begin{proof}
By the $q$-Vandermonde identity, 
\begin{align*}
  \sum_{s}{d \brack d-s}_q {b-c \brack s}_q q^{s^2}&={d+b-c \brack d}_q.\\
  \therefore  \sum_{s}{a \brack d}_q {d \brack s}_q {b-c \brack s}_q q^{s^2}&={a \brack d}_q {d+b-c \brack d}_q.
\end{align*}                                                                              
Now apply Lemma~\ref{id} and replace $s$ by $b-s$ to obtain
\begin{align*}
  \sum_{s}{a \brack s}_q {a-s \brack d-s}_q {b-c \brack s}_q q^{s^2}&={a \brack d}_q {d+b-c \brack d}_q.\\
  \therefore \sum_{s}{a \brack b-s}_q {a-b+s \brack d-b+s}_q {b-c \brack s-c}_q q^{(b-s)^2}&={a \brack d}_q {d+b-c \brack d}_q    .
\end{align*}
Replacing $a$ by $a-b$ and $d$ by $d-b$ results in the statement of the lemma.
\end{proof}

\begin{Lem} \label{second}
For nonnegative integers $a\geq b\geq  d\geq c$,
  $$
\sum_{s} {a-b \brack b-s}_q {b-c \brack s-c}_q {s-c \brack d-c}_q q^{(b-s)^2}={b-c \brack d-c}_q {a-d \brack b-d}_q.
  $$
\end{Lem}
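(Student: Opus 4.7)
The plan is to mirror the strategy used in the proof of Lemma~\ref{first}: apply Lemma~\ref{id} to collapse a product of two $q$-binomials sharing an index into an $s$-independent factor times a simpler $q$-binomial, then recognize the resulting sum as an instance of the $q$-Vandermonde identity.

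The first step is to apply Lemma~\ref{id} to the two middle factors of the summand. Taking the outer, middle, and inner parameters in Lemma~\ref{id} to be $b-c$, $s-c$, and $d-c$, we obtain
\[
{b-c \brack s-c}_q {s-c \brack d-c}_q = {b-c \brack d-c}_q {b-d \brack s-d}_q.
\]
Pulling the $s$-independent factor ${b-c \brack d-c}_q$ outside the sum reduces the desired identity to
\[
\sum_s {a-b \brack b-s}_q {b-d \brack s-d}_q q^{(b-s)^2} = {a-d \brack b-d}_q.
\]

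For the remaining sum, the substitution $t = b-s$, together with the symmetry ${b-d \brack (b-d)-t}_q = {b-d \brack t}_q$, transforms the left-hand side into
\[
\sum_t {a-b \brack t}_q {b-d \brack t}_q q^{t^2}.
\]
This is a standard $q$-Vandermonde sum: applying the identity with parameters $(a,b,r) \mapsto (a-b,\, b-d,\, b-d)$ (so that the exponent $s(b-r+s)$ reduces to $s^2$), and again invoking the symmetry of $q$-binomials, yields ${a-d \brack b-d}_q$, completing the argument.

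The main obstacle is recognizing the correct application of Lemma~\ref{id}; once the three-binomial product collapses into an $s$-independent factor times a simpler $q$-binomial, the remainder is a direct application of $q$-Vandermonde. The change in hypotheses from $a \geq d \geq b \geq c$ in Lemma~\ref{first} to $a \geq b \geq d \geq c$ here dictates a different pairing of the binomials to be combined via Lemma~\ref{id}, but the overall strategy is identical.
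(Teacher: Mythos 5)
Your proof is correct and essentially identical to the paper's: both use Lemma~\ref{id} to rewrite ${b-c \brack s-c}_q{s-c \brack d-c}_q$ as ${b-c \brack d-c}_q{b-d \brack s-d}_q$ and evaluate the remaining sum $\sum_s {a-b \brack b-s}_q{b-d \brack s-d}_q q^{(b-s)^2}={a-d \brack b-d}_q$ via the $q$-Vandermonde identity. The only difference is that you run the argument from the left-hand side forward, while the paper starts from the Vandermonde evaluation and then applies Lemma~\ref{id}; this is a cosmetic reordering, not a different method.
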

\begin{proof}
  By the $q$-Vandermonde identity, we have
  \begin{align*}
    \sum_{s}{a-b \brack b-s}_q {b-d \brack s-d}_q q^{(b-s)^2}&={a-d \brack b-d}_q.\\
\therefore    \sum_{s}{a-b \brack b-s}_q {b-c \brack d-c}_q {b-d \brack s-d}_q q^{(b-s)^2}&={b-c \brack d-c}_q {a-d \brack b-d}_q.
  \end{align*}
  The result follows from Lemma~\ref{id} since
  \begin{align*}
{b-c \brack d-c}_q {b-d \brack s-d}_q={b-c \brack s-c}_q {s-c \brack d-c}_q.    
  \end{align*}\qedhere
\end{proof}

We now solve the recursion stated in Lemma~\ref{recursion} for a cyclic nilpotent operator.
\begin{Thm}\label{alpha}
Let $T$ be a cyclic nilpotent operator on $V$ with $\dim V=N$ and suppose  
\begin{align*}
N\ge a_{1,1}\ge a_{1,2}\ge a_{2,1}\ge a_{2,2}\ge \cdots \ge a_{r,1}\ge a_{r,2}\ge 0,
\\a_{0,1}=a_{0,2}=N, \quad a_{r+1,1}=a_{r+1,2}=0.
\end{align*} 
Then
\begin{align}
|[(a_{1,1},a_{1,2}),\ldots, (a_{r,1},a_{r,2})]|=
 \prod_{i=1}^{r} 
\gauss{a_{i-1,1}-a_{i,1}}{a_{i,1}-a_{i,2}}
\gauss{a_{i,1}-a_{i+1,1}}{a_{i,2}-a_{i+1,1}}
q^{{(a_{i,1}-a_{i,2})}^2}.\label{formula}
\end{align}
\end{Thm}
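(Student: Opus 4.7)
The plan is to establish the formula by induction on the ordering $\succeq$ from Definition~\ref{def:order}, verifying that the conjectured expression $F$ on the right-hand side of~\eqref{formula} agrees with the base case and satisfies the recursion from Lemma~\ref{recursion}. For the base case, when $a_{i,1}=a_{i,2}$ for every $i$, the tuple $[(a_{1,1},a_{1,1}),\ldots,(a_{r,1},a_{r,1})]$ enumerates chains of $T$-invariant subspaces of prescribed dimensions. Since $T$ is cyclic nilpotent, Proposition~\ref{pr:chain} guarantees a unique invariant subspace of each dimension, so the count is $1$. The formula yields $1$ as well, because each $q$-power $q^{(a_{i,1}-a_{i,2})^2}$ collapses to $1$, as do the degenerate $q$-binomial factors $\gauss{a_{i-1,1}-a_{i,1}}{0}$ and $\gauss{a_{i,1}-a_{i+1,1}}{a_{i,1}-a_{i+1,1}}$.

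For the inductive step, assume $F$ equals $|[\cdots]|$ on all tuples $\prec [(a_{1,1},a_{1,2}),\ldots,(a_{r,1},a_{r,2})]$ and substitute $F$ into both sums on the right-hand side of Lemma~\ref{recursion}. Because each $j_i$ (resp.~each $k_i$) appears only in the $i$-th factor of the product form of $F$, both sums decouple into single-variable summations over $i$. Applying Lemma~\ref{first} with $(a,b,c,d)=(a_{i-1,2},a_{i,2},a_{i+1,2},a_{i,1})$ evaluates each $j_i$-sum to $\gauss{a_{i-1,2}-a_{i,2}}{a_{i,1}-a_{i,2}}\gauss{a_{i,1}-a_{i+1,2}}{a_{i,2}-a_{i+1,2}}$, while Lemma~\ref{second} with $(a,b,c,d)=(a_{i-1,1},a_{i,1},a_{i+1,1},a_{i,2})$ evaluates each $k_i$-sum to $\gauss{a_{i,1}-a_{i+1,1}}{a_{i,2}-a_{i+1,1}}\gauss{a_{i-1,1}-a_{i,2}}{a_{i,1}-a_{i,2}}$. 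The excluded $B$-term at $(k_1,\ldots,k_r)=(a_{1,2},\ldots,a_{r,2})$ contributes precisely $F$ evaluated at the original tuple, so subtracting it from the full $B$-sum leaves $F$ on the left-hand side and reduces the required identity to the product equality
$$\prod_{i=1}^r \gauss{a_{i-1,2}-a_{i,2}}{a_{i,1}-a_{i,2}}\gauss{a_{i,1}-a_{i+1,2}}{a_{i,2}-a_{i+1,2}}=\prod_{i=1}^r \gauss{a_{i-1,1}-a_{i,2}}{a_{i,1}-a_{i,2}}\gauss{a_{i,1}-a_{i+1,1}}{a_{i,2}-a_{i+1,1}}.$$

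This product equality follows by a telescoping application of Lemma~\ref{id}: for each $i=1,\ldots,r-1$ apply $\gauss{a}{b}\gauss{b}{c}=\gauss{a}{c}\gauss{a-c}{b-c}$ to the adjacent pair $\gauss{a_{i,1}-a_{i+1,2}}{a_{i,2}-a_{i+1,2}}\gauss{a_{i,2}-a_{i+1,2}}{a_{i+1,1}-a_{i+1,2}}$, converting the left-hand product term by term into the right-hand one, with boundary matching handled by the conventions $a_{0,1}=a_{0,2}=N$ and $a_{r+1,1}=a_{r+1,2}=0$. The main obstacle is ensuring that the tuples appearing in the $A$-sum are strictly smaller under $\succeq$ so that the inductive hypothesis applies, which is immediate when $a_{1,1}>a_{1,2}$. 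In the degenerate case $a_{1,1}=a_{1,2}$, $W_{1,1}$ is forced to be $T$-invariant of dimension $a_{1,1}$, and by Proposition~\ref{pr:chain} equals $\ker T^{a_{1,1}}$; the count then reduces to the analogous problem on this subspace, which carries a cyclic nilpotent operator of dimension $a_{1,1}$, and induction on the number of pairs $r$ completes the argument.
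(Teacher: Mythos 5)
Your proposal is correct and takes essentially the same route as the paper: substitute the conjectured product into the recursion of Lemma~\ref{recursion}, note that the excluded $B$-term contributes the formula itself, evaluate the decoupled sums with Lemmas~\ref{first} and~\ref{second}, and reduce everything to the product identity that both you and the paper settle via Lemma~\ref{id} and telescoping (you on adjacent factors directly, the paper by rewriting as ratios), together with the base-case check. One minor remark: your separate treatment of the degenerate case $a_{1,1}=a_{1,2}$ is unnecessary, since then the constraint $\max(a_{2,2},2a_{1,2}-a_{1,1})\le j_1\le a_{1,2}$ forces $j_1=a_{1,2}$, so the tuples occurring in the $A$-sum are still strictly smaller in the ordering unless the original tuple is already a base case.
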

 
\begin{proof}[Proof]
We show that the formula stated above satisfies the recursion of Lemma~\ref{recursion} by computing separately the sums over the sets $A$ and $B$ defined there. Substitute the expression for $|[(a_{1,1},a_{1,2}),\ldots, (a_{r,1},a_{r,2})]|$ given by \eqref{formula} into the recursion to obtain
\begin{align*}
L &=\sum_{(j_{1},\ldots,j_{r})\in A}{|[(a_{1,2},j_{1}),(a_{2,2},j_{2}),\ldots,(a_{r,2},j_{r})]|\prod_{i=1}^{r}{\gauss{a_{i-1,2}-(2a_{i,2}-j_{i})}{a_{i,1}-(2a_{i,2}-j_{i})}}}\\ 
&=\prod_{i=1}^{r}{\sum_{j_{i}}{\gauss{a_{i-1,2}-a_{i,2}}{a_{i,2}-j_{i}}
\gauss{a_{i,2}-a_{i+1,2}}{j_{i}-a_{i+1,2}}
\gauss{a_{i-1,2}-(2a_{i,2}-j_{i})}{a_{i,1}-(2a_{i,2}-j_{i})}q^{{(a_{i,2}-j_{i})}^2}}}.
\end{align*}
Apply Lemma~\ref{first} to each sum in the above expression, followed by Lemma \ref{id} to obtain
\begin{align*}
L=
\prod_{i=1}^r 
\gauss{a_{i-1,2}-a_{i,2}}{a_{i,1}-a_{i,2}}
\gauss{a_{i,1}-a_{i+1,2}}{a_{i,2}-a_{i+1,2}}
&=
\prod_{i=1}^{r}\frac{\gauss{a_{i-1,2}}{a_{i,1}}
\gauss{a_{i,1}}{a_{i,2}}}{\gauss{a_{i-1,2}}{a_{i,2}}}
\frac{\gauss{a_{i,1}}{a_{i,2}}
\gauss{a_{i,2}}{a_{i+1,2}}}{\gauss{a_{i,1}}{a_{i+1,2}}}\\
&=
\frac{1}{\gauss{N}{a_{1,2}}}
\prod_{i=1}^{r}\frac{\gauss{a_{i,1}}{a_{i,2}}^2
\gauss{a_{i-1,2}}{a_{i,1}}}{\gauss{a_{i,1}}{a_{i+1,2}}}.
\end{align*}

On the other hand,
\begin{align*}
R &=\sum_{(k_{1},\ldots,k_{r})\in B} {|[(a_{1,1},k_{1}),(a_{2,1},k_{2}),\ldots,(a_{r,1},k_{r})]|\prod_{i=1}^{r}{\gauss{k_{i}-a_{i+1,1}}{a_{i,2}-a_{i+1,1}}}}\\
&=\prod_{i=1}^{r}{\sum_{k_{i}}{\gauss{a_{i-1,1}-a_{i,1}}{a_{i,1}-k_{i}}
\gauss{a_{i,1}-a_{i+1,1}}{k_{i}-a_{i+1,1}}
\gauss{k_{i}-a_{i+1,1}}{a_{i,2}-a_{i+1,1}}q^{{(a_{i,1}-k_{i})}^2}}}.
\end{align*}
Apply Lemma~\ref{second} to each sum in the above expression, followed by Lemma \ref{id} to obtain

\begin{align*}
R=
\prod_{i=1}^r 
\gauss{a_{i,1}-a_{i+1,1}}{a_{i,2}-a_{i+1,1}}
\gauss{a_{i-1,1}-a_{i,2}}{a_{i,1}-a_{i,2}}
&=
\prod_{i=1}^{r}\frac{\gauss{a_{i,1}}{a_{i,2}}
\gauss{a_{i,2}}{a_{i+1,1}}}{\gauss{a_{i,1}}{a_{i+1,1}}}
\frac{\gauss{a_{i-1,1}}{a_{i,1}}
\gauss{a_{i,1}}{a_{i,2}}}{\gauss{a_{i-1,1}}{a_{i,2}}}\\
&=
\gauss{N}{a_{1,1}}
\prod_{i=1}^{r}\frac{\gauss{a_{i,1}}{a_{i,2}}^2
\gauss{a_{i,2}}{a_{i+1,1}}}{\gauss{a_{i-1,1}}{a_{i,2}}}.
\end{align*}
Therefore
\begin{align*}
 \frac{L}{R} &= \frac{1}{\gauss{N}{a_{1,2}}\gauss{N}{a_{1,1}}}\gauss{N}{a_{1,1}}\gauss{N}{a_{1,2}}=1. 
\end{align*}

Hence $L=R$, proving that the given expression satisfies the recurrence. The expression in \eqref{formula} satisfies the base cases since 
\begin{align*}
|[(a_{1},a_{1}),(a_{2},a_{2}),\ldots, (a_{r},a_{r})]|
 &=
\prod_{i=1}^{r} 
\gauss{a_{i-1}-a_{i}}{0}
\gauss{a_{i}-a_{i+1}}{a_{i}-a_{i+1}}
q^{(a_i-a_i)^2}\\
&=1.
\end{align*}
This completes the proof.
\end{proof}

The following result gives the number of splitting subspaces for cyclic nilpotent operators.
\begin{Cor}
\label{grssc}
When $N\geq md$, we have the equality
\begin{align*}
|[((d-1)m,(d-2)m),\ldots, (2m,m), (m,0)]|&=\gauss{N-md+m}{m}q^{m^2 (d-1)}.
\end{align*}
In particular, when $N=md$,
\begin{align*}
|[((d-1)m,(d-2)m),\ldots, (2m,m), (m,0)]|
= q^{m^2 (d-1)}.
\end{align*}
\end{Cor}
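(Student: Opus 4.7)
The plan is to apply Theorem \ref{alpha} directly to the specific tuple indexing splitting subspaces and show that nearly every factor in the resulting product collapses to $1$.

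First I would unpack the indices. The tuple $[((d-1)m,(d-2)m),\ldots,(2m,m),(m,0)]$ has $r=d-1$ blocks with
$a_{i,1}=(d-i)m$ and $a_{i,2}=(d-i-1)m$ for $1\leq i\leq d-1$, together with the boundary conventions $a_{0,1}=a_{0,2}=N$ and $a_{d,1}=a_{d,2}=0$. The chain $N\geq a_{1,1}\geq a_{1,2}\geq\cdots\geq a_{d-1,1}\geq a_{d-1,2}\geq 0$ holds because $N\geq md$, so Theorem \ref{alpha} applies.

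Next I would compute each of the four quantities that appear in the general formula. A direct subtraction gives $a_{i,1}-a_{i,2}=m$ for every $i$, $a_{i,1}-a_{i+1,1}=m$ and $a_{i,2}-a_{i+1,1}=0$ for every $i$, while $a_{i-1,1}-a_{i,1}=m$ for $i\geq 2$ and $a_{0,1}-a_{1,1}=N-(d-1)m$. Substituting into \eqref{formula} yields
\begin{align*}
|[((d-1)m,(d-2)m),\ldots,(m,0)]|
&=\prod_{i=1}^{d-1}\gauss{a_{i-1,1}-a_{i,1}}{m}\gauss{m}{0}\,q^{m^2}\\
&=\gauss{N-(d-1)m}{m}\prod_{i=2}^{d-1}\gauss{m}{m}\cdot q^{m^2(d-1)},
\end{align*}
since $\gauss{m}{0}=\gauss{m}{m}=1$.

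Finally, collecting terms gives $\gauss{N-md+m}{m}q^{m^2(d-1)}$, and setting $N=md$ reduces the $q$-binomial to $\gauss{m}{m}=1$, yielding $q^{m^2(d-1)}$. There is no real obstacle here beyond careful index bookkeeping; the result is essentially a specialization of Theorem \ref{alpha} in which the tight monotonicity of the sequences $a_{i,1},a_{i,2}$ forces the Gaussian binomials in the interior of the product to trivialize.
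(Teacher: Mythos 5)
Your proposal is correct and follows essentially the same route as the paper: plug the tuple with $a_{i,1}=(d-i)m$, $a_{i,2}=(d-i-1)m$ into Theorem \ref{alpha}, observe that all Gaussian binomials except the first one, $\gauss{N-(d-1)m}{m}=\gauss{N-md+m}{m}$, reduce to $\gauss{m}{m}$ or $\gauss{m}{0}=1$, and collect the factors $q^{m^2}$ into $q^{m^2(d-1)}$. The index bookkeeping, including the boundary conventions $a_{0,1}=N$ and $a_{d,1}=0$, matches the paper's computation exactly.
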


\begin{proof}
By Theorem \ref{alpha}, we have
\begin{align*}
&|[((d-1)m,(d-2)m),((d-2)m,(d-3)m),\ldots,(2m,m),(m,0)]|\\
  &=\gauss{N-(d-1)m}{m} q^{m^2} \prod_{i=2}^{d-1} 
\left(\gauss{m}{m} \gauss{m}{0} q^{m^2}\right)
\\&=\gauss{N-md+m}{m}q^{m^2(d-1)}. \qedhere
\end{align*}
\end{proof}
Combining the above corollary with Corollary \ref{cor:scalar}, we obtain the following result. 
\begin{Cor}
  \label{cor:1primary}
Let $T$ be a linear operator on an $md$-dimensional vector space over $\Fq$. If $T$ is cyclic and $p$-primary for some linear polynomial $p$, then $\sigma(m,d;T)=q^{m^2(d-1)}$.
\end{Cor}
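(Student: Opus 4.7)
The plan is to reduce the statement to the cyclic nilpotent case already handled by Corollary \ref{grssc}. Since $p$ is linear over $\Fq$, write $p(x) = x - c$ for some $c \in \Fq$, and consider the operator $T' := T - cI$. I would first check that $T'$ is cyclic nilpotent on $V$. Nilpotence is immediate: because $T$ is $p$-primary on an $md$-dimensional space and $p$ has degree $1$, the minimal polynomial of $T$ divides $(x-c)^{md}$, so $(T')^{md} = p(T)^{md} = 0$. Cyclicity is preserved under translation by a scalar, because the subalgebra of $\mathrm{End}(V)$ generated by $T$ equals the subalgebra generated by $T'$, so $V$ is a cyclic $\Fq[T]$-module if and only if it is a cyclic $\Fq[T']$-module.

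Next I would invoke Corollary \ref{cor:scalar} with scalar $-c$ to conclude
\begin{equation*}
\sigma(m,d;T) = \sigma(m,d;T - cI) = \sigma(m,d;T').
\end{equation*}
Finally, applying Corollary \ref{grssc} in the case $N = md$ to the cyclic nilpotent operator $T'$ on $V$ yields $\sigma(m,d;T') = q^{m^2(d-1)}$, which gives the desired equality.

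Since both main ingredients (the scalar-shift invariance of $\sigma$ and the explicit formula for cyclic nilpotent operators) are already in place, there is essentially no obstacle; the only thing to be careful about is the short verification that $T - cI$ inherits cyclicity from $T$ and is genuinely nilpotent, which uses the hypothesis that $\deg p = 1$ in an essential way.
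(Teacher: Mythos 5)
Your proposal is correct and follows the same route as the paper, which obtains the result by combining Corollary~\ref{grssc} (the case $N=md$ for a cyclic nilpotent operator, via Proposition~\ref{splittingsubspaces}) with the scalar-shift invariance of Corollary~\ref{cor:scalar}. Your additional verification that $T-cI$ is cyclic and nilpotent is accurate and just makes explicit what the paper leaves implicit.
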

\begin{Rem}
One of the main results of Chen and Tseng \cite[Cor. 3.4]{sscffa} is the computation of $\sigma(m,d;T)$ when the similarity class type of $T$ is $\{\left(md,(1)\right)\}$. The above corollary corresponds to the similarity class type $\{\left(1,(md)\right)\}$.  
\end{Rem}

Our results may be used to enumerate invertible matrices of a special form. Recall that the number of ordered bases for an $m$-dimensional vector space over $\mb{F}_q$ is given by $\gamma_m(q) :=(q^m-1) \ldots (q^m-q^{m-1}).$
\begin{Cor}
The number of invertible $md \times md$ matrices over $\mb{F}_q$ of the form
\begin{equation} 
\left [
\begin{array}{cccccccccc}\label{matrix} 
a_{1,1} & \cdots & a_{1,m} & 0 & \cdots & 0 & \cdots & 0 & \cdots & 0 \\
a_{2,1} & \cdots & a_{2,m} & a_{1,1} & \cdots & a_{1,m} &\cdots & 0 & \cdots & 0 \\
\vdots & \vdots & \vdots & \vdots & \vdots & \vdots & \vdots & \vdots & \vdots & \vdots \\
a_{d,1} & \cdots & a_{d,m} & a_{d-1,1} & \cdots & a_{d-1,m} & \cdots & a_{1,1}& \cdots & a_{1,m} \\
\vdots & \vdots & \vdots & \vdots & \vdots & \vdots & \vdots & \vdots & \vdots & \vdots \\
a_{md,1} & \cdots & a_{md,m} & a_{md-1,1} & \cdots & a_{md-1,m} & \cdots & a_{md-d+1,1}& \cdots & a_{md-d+1,m} \\
\end{array} 
\right ]
\end{equation}
equals $\gamma_m(q)\cdot q^{m^2(d-1)}$.
\end{Cor}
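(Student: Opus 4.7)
The plan is to interpret the matrix in \eqref{matrix} as encoding an ordered tuple $(v_1, \ldots, v_m)$ of vectors in $V = \Fq^{md}$ together with all their iterates under a cyclic nilpotent operator $T$, and then to reduce the count to Corollary~\ref{cor:1primary}. I would take $T$ to be the cyclic nilpotent operator on $V$ acting on the standard basis $e_1, \ldots, e_{md}$ by $T e_i = e_{i+1}$ for $1 \le i < md$ and $T e_{md} = 0$; on column vectors, $T$ shifts coordinates down by one and inserts a zero at the top.

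Next, letting $v_j \in V$ denote the $j$-th column of the matrix in \eqref{matrix} for $1 \le j \le m$, I would verify by inspecting the shift pattern in \eqref{matrix} that the column in position $(k-1)m + j$ equals $T^{k-1} v_j$ for all $1 \le k \le d$ and $1 \le j \le m$. This bookkeeping step is the main place where care is needed, since one must check that the $k-1$ zero rows at the top of the $k$-th block of $m$ columns correspond exactly to the action of $T^{k-1}$. The matrix is therefore invertible precisely when the $md$ vectors $T^{k-1} v_j$ (for $1 \le j \le m$, $1 \le k \le d$) form a basis of $V$, which is equivalent to $(v_1, \ldots, v_m)$ being an ordered basis of an $m$-dimensional $T$-splitting subspace $W$.

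Finally, since each $T$-splitting subspace contributes exactly $\gamma_m(q)$ such ordered bases, the total count of invertible matrices of the form \eqref{matrix} equals $\gamma_m(q) \cdot \sigma(m, d; T)$. The operator $T$ is cyclic with minimal polynomial $x^{md}$, hence $p$-primary for the linear polynomial $p(x) = x$, so Corollary~\ref{cor:1primary} gives $\sigma(m, d; T) = q^{m^2(d-1)}$, yielding the claimed count $\gamma_m(q) \cdot q^{m^2(d-1)}$.
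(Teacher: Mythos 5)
Your proposal is correct and follows essentially the same route as the paper: identify the blocks of columns with $T^{k-1}$ applied to the first $m$ columns for the down-shift cyclic nilpotent operator $T$, characterize invertibility as those columns forming an ordered basis of a $T$-splitting subspace, and conclude via $\gamma_m(q)\cdot\sigma(m,d;T)$ and Corollary~\ref{cor:1primary}. No gaps to report.
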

\begin{proof}
  Let $V=\Fq^{md}$ viewed as a vector space over $\Fq$. Let $T$ denote the right shift operator
$
T(x_1,\ldots,x_{md})=(0,x_1,\ldots,x_{md-1}).
$
Then $T$ is cyclic and nilpotent. If $\alpha_1,\ldots,\alpha_m$ denote the first $m$ columns of the matrix \eqref{matrix} above, then the matrix is nonsingular if and only if the set
$$
\{\alpha_1,\ldots,\alpha_m,T\alpha_1,\ldots,T\alpha_m,\ldots,T^{d-1}\alpha_1,\ldots,T^{d-1}\alpha_m\}
$$
is linearly independent. In other words $\{\alpha_1,\ldots,\alpha_m\}$ may be characterized as an ordered basis for some $m$-dimensional $T$-splitting subspace. Since number of such bases is $\gamma_m(q)\cdot \sigma(m,d;T)$, the result now follows from Corollary \ref{cor:1primary}.   
\end{proof}

A result in a similar vein to the above corollary is proved by Gluesing-Luerssen and Ravagnani \cite[Cor. 7.2]{MR4093016} who find an expression for the number of nonsingular matrices over $\F_q$ whose nonzero entries lie within a Ferrers shape. 
  
\section{Polynomiality of $\sigma_q(m,d;\tau)$}
\label{sec:poly}
Given a positive integer $n$, let $\beta(q,n)$ denote the number of irreducible polynomials of degree $n$ over $\Fq$. It is well known that
$$
\beta(q,n)=\frac{1}{n}\sum_{d\mid n}\mu(d)q^{n/d},
$$
where $\mu$ denotes the classical Möbius function. In fact $\beta(n,d)$ also counts the number of so called primitive necklaces of length $n$ over a $q$-ary alphabet \cite[Thm. 7.1]{MR1231799}. This interpretation entails the fact that, for $n$ fixed, the number $\beta(q,n)$ is strictly increasing as a function of $q$.

Given a similarity class type $\tau$, let $q_0(\tau)$ denote the smallest prime power $\tilde{q}$ for which there exists a linear operator of type $\tau$ over the field $\mathbb F_{\tilde{q}}$. If $q\geq q_0(\tau)$ is a prime power then it follows from the property of $\beta(q,n)$ mentioned above that there exists a linear operator of type $\tau$ over $\Fq$.
\begin{Def}
If $\tau$ is a similarity class type of size $md$ and $q\geq q_0(\tau)$, then $\sigma_q(m,d;\tau)$ denotes the number of $m$-dimensional splitting subspaces for a linear operator of similarity class type $\tau$ defined over an $\Fq$-vector space of dimension $md$.  
\end{Def}
 By Corollary~\ref{type}, the quantity $\sigma_q(m,d;\tau)$ is well-defined. We will prove that $\sigma_q(m,d;\tau)$ is a polynomial in $q$. 
In view of \eqref{eqn2}, given a similarity class type $\tau$, and a prime power $q\geq q_0(\tau)$,  we may define
$$
\left|[(a_{1,1},a_{1,2}),(a_{2,1},a_{2,2}),\ldots,(a_{r,1},a_{r,2})]_{q,\tau}\right|:= \left|[(a_{1,1},a_{1,2}),(a_{2,1},a_{2,2}),\ldots,(a_{r,1},a_{r,2})]_T\right|,
$$
where $T$ is an operator of similarity class type $\tau$ defined over some $\Fq$ vector space. 

 \begin{Thm}
   \label{th:polynomiality}
   For each similarity class type $\tau$ and nonnegative integers $a_{i,j}(1\leq i\leq r; 1\leq j\leq 2)$, the quantity
   $$
\left|[(a_{1,1},a_{1,2}),\ldots, (a_{r,1},a_{r,2})]_{q,\tau}\right|
$$
is a polynomial in $q$ for $q\geq q_0(\tau)$.
\end{Thm}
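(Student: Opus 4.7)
The plan is to induct on the tuple $\nu=[(a_{1,1},a_{1,2}),\ldots,(a_{r,1},a_{r,2})]$ with respect to the total order $\succeq$ of Definition~\ref{def:order}. The recursion of Lemma~\ref{recursion} expresses $|[\nu]_{q,\tau}|$ as a $\mathbb{Z}[q]$-linear combination of quantities $|[\mu]_{q,\tau}|$ with $\mu\prec\nu$: every coefficient on the right-hand side is a product of Gaussian binomials $\gauss{n}{k}$, and each Gaussian binomial is a polynomial in $q$ (with the convention that it vanishes when its arguments fall out of range). Consequently, the inductive step automatically preserves polynomiality, and the entire argument reduces to establishing polynomiality for the base cases $[(a_{1},a_{1}),(a_{2},a_{2}),\ldots,(a_{r},a_{r})]$, whose cardinality counts flags $V\supseteq W_1\supseteq\cdots\supseteq W_r$ of $T$-invariant subspaces with $\dim W_i=a_i$.

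To handle the base-case count, I would use the primary decomposition $V=\bigoplus_{i=1}^{k}V_i$ and the lattice product $L(T)=\prod_{i=1}^{k}L(T_i)$ recalled in Section~\ref{sec:type}. Every flag of $T$-invariant subspaces decomposes uniquely into a collection of flags, one per primary component, and the prescribed $\F_q$-dimensions translate into a finite sum over compositions $a_j=\sum_{i=1}^{k}a_j^{(i)}$ (with $d_i=\deg p_i$ dividing every $a_j^{(i)}$) of products of flag counts in each primary component. By Theorem~\ref{thmbrick}, each such primary contribution equals the number of $K_i$-invariant flags of prescribed $K_i$-dimensions for a nilpotent operator of partition type $\lambda_i$ over $K_i\cong\F_{q^{d_i}}$. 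Classical Hall-polynomial theory for finite abelian $p$-groups (see Macdonald, \emph{Symmetric Functions and Hall Polynomials}, Ch.~II) implies that such flag counts are polynomials in $|K_i|=q^{d_i}$, and hence polynomials in $q$.

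Summing and multiplying finitely many polynomials in $q$ again yields a polynomial in $q$, establishing base-case polynomiality and completing the induction. The principal obstacle is the Hall-polynomial input for nilpotent operators; a self-contained derivation would iterate the classical submodule-enumeration theorem of Birkhoff (or Hall), handling the cyclic sub-case via Proposition~\ref{pr:chain} (which gives exactly one invariant subspace of each admissible dimension) and reducing the general nilpotent case by induction on the partition $\lambda_i$. Once base-case polynomiality is in hand, the recursion of Lemma~\ref{recursion} propagates it to every tuple $\nu$, proving the theorem.
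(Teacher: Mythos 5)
Your argument is correct, and its overall architecture matches the paper's: reduce via the recursion of Lemma~\ref{recursion} (whose coefficients are Gaussian binomials, hence polynomials in $q$) to the base cases $[(a_1,a_1),\ldots,(a_r,a_r)]$, i.e.\ counts of flags of invariant subspaces, and then split those counts across the primary decomposition $L(T)=\prod_i L(T_i)$, summing over compositions of the dimensions. Where you diverge is in the input used for a single primary component. The paper first treats the primary case by induction on the flag length $r$: the quantity $g_q(\lambda,\mu,d)$ counting invariant subspaces on which the restriction has type $\{(d,\mu)\}$ is, by Fripertinger's theorem, a rational function of $q$, and polynomiality is then extracted from the fact that a rational function taking integer values at infinitely many integers is a polynomial. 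You instead pass through Theorem~\ref{thmbrick} to identify $L_{\F_q}(T_i)$ with the submodule lattice of a nilpotent operator over $K_i\cong\F_{q^{d_i}}$ (taking care, as you do, that $d_i$ must divide the $\F_q$-dimensions in that component) and invoke classical Hall-polynomial/Delsarte theory to get that each chain count is a polynomial in $q^{d_i}$; summing over chains of types then plays the role of the paper's induction on $r$. The two routes are essentially equivalent in content --- indeed the paper's closing remark records that $g_q(\lambda,\mu,d)=\alpha_{\lambda}(\mu;q^d)$ --- but your version gets polynomiality directly from the abelian-$p$-group literature and avoids the integer-valued-rational-function step, at the cost of explicitly carrying the Brickman--Fillmore field change and the Hall-polynomial citation; the paper's version leans only on Fripertinger's published counts for invariant subspaces of a linear operator. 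Both are complete proofs, and your reduction of the recursion step and the dimension bookkeeping across primary components are handled exactly as in the paper.
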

\begin{proof}

As the $q$-binomial coefficients are polynomials in $q$, it suffices to show that the base cases in the recursion of Lemma \ref{recursion} are polynomials in $q$ for the similarity class type $\tau$. For any operator $T$, define
\begin{align*}
\phi(a_1,\ldots,a_r;T):=|[(a_1,a_1),\ldots,(a_r,a_r)]_T|,
\end{align*}
and, for each similarity class type $\tau$, let
\begin{align*}
\phi_q(a_1,\ldots,a_r;\tau):=|[(a_1,a_1),\ldots,(a_r,a_r)]_{q,\tau}|\quad \mbox{ for }q\geq q_0(\tau).
\end{align*}
 We first prove that $\phi_q(a_1,\ldots,a_r;\tau)$ is a polynomial in $q$ whenever $\tau$ is primary by induction on $r$. Suppose $\tau=\{(d,\lambda)\}$ for some partition $\lambda$. The base case is $r=1$. Now $\phi_q(a_1;\tau)$ is the number of invariant subspaces of dimension $a_1$ corresponding to $\tau$.  
 It follows from the work of Fripertinger \cite[Thm. 2]{MR2801603} that the number of invariant subspaces of a given dimension for a primary similarity class type on an $\Fq$-vector space is a rational function of $q$. Such a function is necessarily a polynomial in $q$ since it takes integer values at infinitely many integers \cite[Prop. X.1.1]{MR1421321}. This settles the base case.


For the inductive step suppose $r>1$. For each prime power $q$, let $T_q$ be an operator of type $\tau$ defined over $\Fq$. For each partition $\mu$ whose Young diagram is contained in that of $\lambda$ (denoted $\mu\subseteq \lambda$), let $g_q(\lambda,\mu,d)$ denote the number of subspaces $W \in L(T_q)$ for which the similarity class type of the restriction of $T_q$ to $W$ is $\{(d,\mu) \}.$ For $\lambda$ and $\mu$ fixed, the quantity $g_q(\lambda,\mu,d)$ is a rational function \cite[Thm. 1]{MR2801603} and, in fact, a polynomial in $q^d$. For each positive integer $k$, define 
$$
D_\tau(k): = \{ \mu : \mu \subseteq \lambda \mbox{ and } |\mu|=k/d \},
$$
and set $\delta_\tau(k) = |D_\tau(k)|$. If we write $D_\tau(a_1)=\{\mu_i\}_{1\leq i\leq \delta_\tau(a_1)}$ then, by considering restrictions of $T_q$ to invariant subspaces of dimension $a_1$, it follows that 
$$
\phi(a_1,\ldots,a_{r};T_q)= \sum_{j=1}^{\delta_\tau(a_1)} g_q(\lambda,\mu_j,d) \ \phi(a_2,\ldots,a_{r};T_q^{(j)}),
$$
where $T_q^{(j)}$ is an operator of similarity class type $\{(d,\mu_j)\}$ for each $j\leq \delta_{\tau}(a_1)$. Since the above equation holds for each prime power $q$, it follows that 
$$
\phi_q(a_1,\ldots,a_{r};\tau)= \sum_{j=1}^{\delta_\tau(a_1)} g_q(\lambda,\mu_j,d) \ \phi_q(a_2,\ldots,a_{r};\tau_j),
$$
where $\tau_j = \{(d, \mu_j)\}$.  By the inductive hypothesis, each $\phi_q(a_2,\ldots,a_{r};\tau_j)$ is a polynomial in $q$. Therefore $\phi_q(a_1,\ldots,a_{r};\tau)$ is a polynomial in $q$ whenever $\tau$ is primary.

Now suppose $\tau$ is an arbitrary similarity class type and let $q\geq q_0(\tau)$ be a prime power. Let $T(=T_q)$ be an operator of similarity class type $\tau$ defined on some vector space $V$ over $\Fq$. Let $V=V_1 \oplus \cdots \oplus V_s$ denote the decomposition of $V$ into primary parts $V_i$ with $\dim V_i = d_i$ for each $i$. Suppose $T_i$ denotes the restriction of $T$ to $V_i$ for $i\leq r$. Given any flag $W_1 \supseteq \cdots \supseteq W_r$ in $L(T)$, with $\dim W_i = a_i$, 
write $W_i=U_{i1} \oplus \cdots \oplus U_{is}$  with $U_{ij}\in V_j$ for $1\leq i\leq r$ and $1\leq j\leq s$.
If $\dim U_{ij}=d_{ij}$, 
then it follows that
\begin{align*}
a_i = \sum_{j=1}^{s} d_{ij} & \ (1 \leq i \leq r)\quad  \mbox{ and }\quad  d_j\geq d_{1j}\geq \cdots \geq d_{rj} \ ( 1 \leq j \leq s).
\end{align*} 
Counting flags within the primary parts and summing up, we obtain
$$
\phi(a_1,\ldots,a_{r};T)= \sum_{\substack{\sum_{j=1}^{s}d_{ij}=a_i \\ d_{1j} \leq d_j}} \ \prod_{j=1}^{s} \phi(d_{1j},\ldots,d_{rj};T_j),
$$
where the $d_{ij}$'s vary over nonnegative integers. It follows that
$$
\phi_q(a_1,\ldots,a_{r};\tau)= \sum_{\substack{\sum_{j=1}^{s}d_{ij}=a_i \\ d_{1j} \leq d_j}} \ \prod_{j=1}^{s} \phi_q(d_{1j},\ldots,d_{rj};\tau_j),
$$
whenever $q\geq q_0(\tau)$ with $\tau_j$ denoting the similarity class type of $T_j \ (1 \leq j \leq s).$ Since $\tau_j$ is primary for each $j\leq s$, the expression $\phi_q(d_{1j},\ldots,d_{rj};\tau_j)$ is a polynomial in $q$ and, consequently, so is $\phi_q(a_1,\ldots,a_{r};\tau)$.
\end{proof}

\begin{Cor}
If $m,d,\tau$ are fixed, then $\sigma_q(m,d;\tau)$ is a polynomial in $q$.
\end{Cor}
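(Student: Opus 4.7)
The claim is essentially an immediate consequence of the machinery built up in the preceding sections, so the ``proof'' is really an assembly of three ingredients. The plan is as follows.

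First, I would invoke Proposition~\ref{splittingsubspaces} to rewrite the number of splitting subspaces as the cardinality of a specific labelled flag set: for any operator $T$ of similarity class type $\tau$ on an $md$-dimensional $\Fq$-vector space,
\[
\sigma(m,d;T)=\bigl|[((d-1)m,(d-2)m),((d-2)m,(d-3)m),\ldots,(2m,m),(m,0)]_T\bigr|.
\]
This converts the quantity of interest into an instance of precisely the kind of flag count analyzed in Section~\ref{sec:poly}.

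Second, by Corollary~\ref{type} (or more precisely by the well-definedness of the notation $|[\cdots]_{q,\tau}|$ established via \eqref{eqn2} just before Theorem~\ref{th:polynomiality}), the right-hand side depends only on $\tau$ and not on the specific choice of $T$ of type $\tau$. Hence for every prime power $q\geq q_0(\tau)$ we have
\[
\sigma_q(m,d;\tau)=\bigl|[((d-1)m,(d-2)m),\ldots,(2m,m),(m,0)]_{q,\tau}\bigr|.
\]

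Finally, I would apply Theorem~\ref{th:polynomiality} directly to the tuple of pairs on the right, with $r=d$ and $(a_{i,1},a_{i,2})=((d-i)m,(d-i-1)m)$ for $1\leq i\leq d$. That theorem asserts precisely that the cardinality on the right is a polynomial in $q$ for all $q\geq q_0(\tau)$, which completes the proof. There is no serious obstacle here: the real work has already been done in Theorem~\ref{th:polynomiality}, whose proof reduces the general flag count to base cases $\phi_q(a_1,\ldots,a_r;\tau)$ and then to Fripertinger's polynomiality of invariant subspace counts for primary types; the present corollary is merely the specialization of that theorem to the particular tuple singled out by Proposition~\ref{splittingsubspaces}.
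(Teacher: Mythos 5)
Your proposal is correct and follows essentially the same route as the paper, whose proof is simply the citation of Theorem~\ref{th:polynomiality} together with Proposition~\ref{splittingsubspaces} (with the well-definedness via Corollary~\ref{type} already built into the notation $|[\cdots]_{q,\tau}|$). The only slip is a harmless off-by-one in your indexing: the tuple has $r=d-1$ pairs, namely $(a_{i,1},a_{i,2})=((d-i)m,(d-i-1)m)$ for $1\leq i\leq d-1$, not $r=d$.
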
 
\begin{proof}
  Follows from Theorem \ref{th:polynomiality} and Proposition \ref{splittingsubspaces}. 
\end{proof} 

\begin{Rem}
The polynomial $g_q(\lambda,\mu,d)$ appearing in the proof of the theorem above is closely related to the number of subgroups of type $\mu$ in a finite abelian $p$-group of type $\lambda$, denoted $\alpha_{\lambda}(\mu;p)$.  Delsarte~\cite{MR25463} proved that $\alpha_{\lambda}(\mu;p)$ is a polynomial in $p$. We refer to the expository account of Butler \cite[Lem. 1.4.1]{MR1223236} for the details. It can be shown that $g_q(\lambda,\mu,d)=\alpha_{\lambda}(\mu;q^d)$. 
\end{Rem}
\section{Acknowledgements}   
The authors would like to thank Dennis Tseng for carefully going through the initial draft of this paper and suggesting improvements. They extend thanks to Amritanshu Prasad for some helpful discussions.

\end{document}